\newtheorem{thm}{Theorem}
\newtheorem{lem}{Lemma}
\title{Second order scheme for self-similar solutions of a time-fractional porous medium equation on the half-line\footnote{This is an accepted version of the manuscript published in \textit{Applied Mathematics and Computation} 424C (2022), 127033 with DOI: \url{https://doi.org/10.1016/j.amc.2022.127033}}}
\author{Hanna Okrasi{\'n}ska-P{\l}ociniczak\thanks{Department of Mathematics, Wroclaw University of Environmental and Life Sciences, ul. C.K. Norwida 25, 50-275 Wroclaw, Poland}$\;$, \L ukasz P\l ociniczak\thanks{Faculty of Pure and Applied Mathematics, Wroc{\l}aw University of Science and Technology, Wyb. Wyspia{\'n}skiego 27, 50-370 Wroc{\l}aw, Poland}$\;^,$\footnote{Corresponding author. Mail: lukasz.plociniczak@pwr.edu.pl}}
\date{}
\begin{document}
\maketitle
	
\begin{abstract}		
	Many physical, biological, and economical systems exhibit various memory effects due to which their present state depends on the history of the whole evolution. Combined with the nonlinearity of the process these phenomena pose serious difficulties in both analytical and numerical treatment. We investigate a time-fractional porous medium equation that has proved to be important in many applications, notably in hydrology and material sciences. We show that solutions of the free boundary Dirichlet, Neumann, and Robin problems on the half-line satisfy a Volterra integral equation with a non-Lipschitz nonlinearity. Based on this result we prove existence, uniqueness, and construct a family of numerical methods that solve these equations outperforming the usual finite difference approach. Moreover, we prove the convergence of these methods and support the theory with several numerical examples. \\
		
	\noindent\textbf{Keywords}: numerical method, porous medium equation, anomalous diffusion, fractional derivative, Volterra equation, non-Lipschitz\\
	
	\noindent\textbf{AMS Classification}: 35R11, 45G10, 65R20
\end{abstract}

\section{Introduction}
During the last few decades experimentalists, modellers, theoreticians, and numerical analysts have been investigating many aspects of nonlocality in mathematical objects. These inquiries have certainly been motivated by an increasingly rich experimental evidence for anomalous behaviour, mathematically nontrivial models, and professional curiosity. Apart from many applications, probably the one that is most researched is the so-called anomalous diffusion that arises when the transport is slower or faster than in the classical case \cite{Met00,Kla08}. Probabilistically, this can be characterised with a deviation from the linear mean-square displacement for a randomly walking particle \cite{Kla12}. There is a broad experimental evidence of such dynamics in quantum optics \cite{Sch97}, physics of plasma \cite{Del05}, movement of bacteria and amoeba \cite{Lev97}, G-protein on cell surface \cite{Sun17}, hydrology \cite{De11,Pac03,Plo14, El04, Sun13}, and motion of dislocations in crystal lattice \cite{Hea72}. 

The nonlocality in the anomalous diffusion in usually modelled with a use of the fractional derivative \cite{Met00}. There are many definitions of such operators and it depends on the situation which one is the most suited for a specific model. For example, temporal nonlocality (memory) can be described with Riemann-Liouville or Caputo derivatives while nonlocality in space requires ,,symmetric'' operator such as the fractional Laplacian \cite{Pod98, Caf11, Vaz17}. Physical derivations of the fractional porous media equation have been given for example in \cite{Plo15, Plo19}. We have argued there that the emergence of the time-fractional derivative is a manifestation of the waiting-time phenomenon happening in the medium, i.e. the fluid can be trapped in certain regions for prolonged periods of time which leads to the so-called subdiffusion. Recently, some experimental evidence has been given in \cite{El19} that supports this hypothesis. On the other hand, the spatial nonlocality in the porous medium equation is a consequence of the long jump phenomenon (for a physical derivation see \cite{Plo19}). That is to say, due to medium's complex geometry and heterogeneity fluid parcels can traverse relatively long distances in short amounts of time what in the literature is called superdiffusion. Mathematically this leads to the fractional gradient operator which is closely related to the fractional Laplacian \cite{Caf11, Bil15, Dji19}. A thorough summary of the theory of the fractional porous medium equation can be found \cite{Vaz17}. 

Our previous studies investigated the time-fractional porous medium equation applied as a model for moisture transport in construction materials
\begin{equation}
	\partial^\alpha_t u = \left(u^m u_x\right)_x, \quad 0<\alpha< 1, \quad m>1,
\label{eqn:DiffEqPDE}
\end{equation}
where the fractional derivative is of the Riemann-Liouville type
\begin{equation}
	\partial^\alpha_t u(x,t) = \frac{1}{\Gamma(1-\alpha)}\frac{\partial}{\partial t}\int_0^t (t-s)^{-\alpha} u(x,s) ds.
\end{equation}
with the vanishing initial condition and dry medium at infinity
\begin{equation}
	u(x,0) = 0, \quad u(\infty, t) = 0, \quad x>0, \quad t>0.
\label{eqn:CondPDE}
\end{equation}
The natural function space in which the weak solution of a fractional diffusion equation should be sought is 
\begin{equation}
	C\left([0,1]; H^1(\mathbb{R}_+)\right),
\end{equation}
where $H^1$ is the usual Sobolev space. Below, by the use of integral equation techniques, we show that the self-similar solution of the above problem is indeed continuous and differentiable almost everywhere with respect to space and time. Moreover, in physical derivation of the time-fractional porous medium equation (\ref{eqn:DiffEqPDE}) one arrives at the Caputo derivative on the left-hand side (see for ex. \cite{Plo15}). However, due to the zero initial condition this derivative coincides with the Riemann-Liouville one \cite{Pod98} and hence the present form of (\ref{eqn:DiffEqPDE}). Additionally, motivated by the modelling using continuous time random walk it is interesting to consider some general fractional operators that govern the evolution in time (for some interesting examples and analyses see \cite{Fan21, Fu21}). We leave this topic for future studies.

The parameters $\alpha$ and $m$ are associated with a specific experiment and are subject to fitting with the data. Based on \cite{Sun13} we can think that usually $\alpha\in [0.6,1]$ while $m\approx 7-8$. The parameter $\alpha$ denotes the level of nonlocality of the model with $\alpha = 1$ being the classical case. Since, as will be seen below, our model (\ref{eqn:DiffEqPDE}) describes a slower evolution than in the classical case we speak about subdiffusion.

In previous works we have proved existence and uniqueness of a self-similar solution to the Dirichlet problem \cite{Plo17a}, provided some approximations \cite{Plo14}, solved inverse problem of diffusivity identification \cite{Plo16}, and constructed a numerical scheme \cite{Plo19a}. This numerical method, although only first order accurate, provided a superior performance over the finite difference method applied to the governing PDE. The idea was to use a series of transformations leading to a nonlinear Volterra equation and then to discretize it. In this work we follow a similar route but now we introduce several improvements. First of all, we sketch how to construct an arbitrarily high order scheme and explicitly devise the second order accurate linear method. Then, we prove its convergence under some mild assumptions. The difference from our previous work is that we use the general theory of Volterra equations in order to anticipate the behaviour of its solution. This lets us to conduct yet another transformation which facilitates the crucial step in the convergence proof yielding a stronger result. Moreover, our method is constructed in a way that it gives exact results for power functions which are the leading-order approximations to the solution of the time-fractional porous medium equation. This further improves the accuracy within the second order. For example, as will be shown below, computing the wetting front to several decimal places requires using only moderate step sizes. 

There is a vast literature concerning numerical methods for fractional differential equations. For general surveys one can refer to \cite{Die02, Bal12}. The linear fractional diffusion either with temporal or spatial nonlocality has been solved for example in \cite{Tad06,Li09,Yus05}. The usual approach to constructing a numerical scheme for the time-fractional diffusion is based on discretizing nonlocal derivative by, for example, L1 scheme \cite{Liu07,Lan05}, Gr\"unwald-Letnikov \cite{Yus05}, or Convolution quadrature \cite{Cue06}. This provides the quasi-discrete scheme that can be fully discretized by introducing a finite difference \cite{Lin07}, finite element \cite{Zhu16} or other spatial discretization \cite{Li09}. 

On the other hand, the case of nonlinear anomalous diffusion has been rigorously treated only in relatively few papers (especially when the diffusivity is a function of the solution). Some recent examples include time-fractional reaction-diffusion equations \cite{Li19c}. One of the very useful tools in proving convergence for a nonlinear problem is the discrete fractional Gr\"onwall inequality that was developed and applied to nonlinear time-fractional diffusion problems in \cite{Li18,Lia18} (note that probably the first account of fractional Gr\"onwall inequality has been given in \cite{McK82} but see also \cite{Web19} and references therein). Numerical methods for equations with non-smooth data have been also analysed in \cite{Zak20}. On the other hand, the space-fractional porous medium has been discretized in \cite{Del14} with the use of finite difference method applied to the extension problem \cite{Caf07}. Furthermore, in \cite{Del18} some very robust numerical methods have been devised, thoroughly analysed, and applied to a very broad classes of nonlocal equations. Moreover, authors of \cite{Gas17} used a multi-grid waveform method for the time-fractional case. The nonlinear source terms have been considered in \cite{Bhr16}. The main difficulty in the analysis of the porous medium equation (even in the classical case) is the degeneracy of diffusivity, i.e. the coefficient of $u_x$ can vanish. This fact makes both theoretical and numerical studies interesting and non-trivial. For several years many approaches of numerical treatment of the classical porous medium equation have been proposed. To paint the background we mention only a few papers. For example, in \cite{Dib84} authors used a finite difference method with an interface tracking algorithm. This feature is crucial since due to degeneracy in many circumstances the solution is characterized with a finite speed of propagation what translates into a free-boundary problem \cite{Sta14}. Also, a finite element method for the classical porous medium equation has been analysed in \cite{Arb96,Ebm98}. This topic is still vigorously investigated and mathematicians constantly develop some new approaches (for ex. \cite{Eti12,Pop02,Eis18}).

The root of our method lies in appropriate discretization of a nonlinear Volterra equation (for a thorough treatment see \cite{Bru17}). A survey of numerical methods for such problems can be found in \cite{Bak00,Lin85}. Notice that the majority of the theory concerns Lipschitz nonlinearity while our model fails to satisfy this condition. There are only a few papers that developed rigorous methods for nonlipschitzian case \cite{Fri97}. Notably, in \cite{Buc97} an iterative scheme has been proposed. We also mention our previous work \cite{Plo17} where an idea for a proof similar to the one used in the present paper originated. Finally, in \cite{Cor00,Ari19} a highly readable theoretical summaries about non-Lipschitz problems have been published. 

As we mentioned before, this paper concerns a high order numerical method for finding self-similar solutions of (\ref{eqn:DiffEqPDE}). In Section \ref{sec:SelfSimilar} we perform a series of transformations that reduce the main nonlocal nonlinear PDE into an ordinary Volterra equation with non-Lipschitz nonlinearity. As a side result we prove a existence and uniqueness result about self-similar solutions on the half-line with Dirichlet, Neumann, and Robin conditions. Next, in Section \ref{sec:NumericalMethod} we present a general scheme of numerical method construction along with explicit formulas for a second order linear interpolation scheme (trapezoidal). We also prove that it is convergent provided that the parameter $m$ is sufficiently large. In Section \ref{sec:NumericalExamples} we illustrate our theory with numerical examples concerning some exemplary equations and the time-fractional porous medium equation itself. We close the paper with some concluding remarks.

\section{Self-similar solutions and Volterra equations}\label{sec:SelfSimilar}
In applications one is frequently interested in finding self-similar solutions of (\ref{eqn:DiffEqPDE}) in the form
\begin{equation}
	u(x,t) = t^{a} U(\eta), \quad \eta = x t^{-b},
\label{eqn:SelfSimialarVars}
\end{equation}
where $a,b \geq 0$ are constants to be found and the similarity variable is denoted by $\eta$. Our main motivation for looking for such solutions lies in the fact that in the real-world experiments of moisture percolation in porous media one observes the self-similar profile (see \cite{El04, El19, Pac03, Sun13}). On the other hand, in a large generality Lie group methods can be used to look for many families of self-similar solutions in a systematic way \cite{Gaz09, Gaz13, Gor00}. When we plug the above into the original PDE (\ref{eqn:DiffEqPDE}) we transform the spatial
\begin{equation}
	\left( u^m(x,t) u_x(x,t) \right)_x = t^{a(m+1)-2b} \frac{d}{d\eta}\left(U^m(\eta) \frac{d}{d\eta}U(\eta)\right).
\end{equation}
and the temporal derivatives
\begin{equation}
\begin{split}
	\partial^\alpha_t u(x,t)&=\frac{\partial}{\partial t} \left( I^{1-\alpha}_t \left(t^a U(x t^{-b})\right) \right)=\frac{1}{\Gamma(1-\alpha)}\frac{\partial}{\partial t} \int_0^t (t-z)^{-\alpha} z^a U(x z^{-b})dz\\
	&=\frac{1}{\Gamma(1-\alpha)}\frac{\partial}{\partial t} \left(t^{a-\alpha+1} \int_0^1 (1-s)^{-\alpha} s^a U(\eta s^{-b})ds\right),
\end{split}
\end{equation}
If we further introduce the Erd\'{e}lyi-Kober fractional integral operator (see \cite{Kir93})
\begin{equation}
	I^{\beta,\gamma}_\delta U(\eta) = \frac{1}{\Gamma(\gamma)}\int_0^1 (1-z)^{\gamma-1} z^\beta U(\eta z^\frac{1}{\delta}) dz,
\label{eqn:EK}
\end{equation}
we can write (\ref{eqn:DiffEqPDE}) as 
\begin{equation}
	t^{\alpha+am - 2b}\frac{d}{d\eta}\left(U^m \frac{dU}{d\eta}\right) = \left[(1-\alpha+a)-b\eta \frac{d}{d\eta}\right]I^{a, 1-\alpha}_{-\frac{1}{b}} U, \quad 0<\alpha< 1,
\end{equation}
Now, the self-similar solution is possible only if
\begin{equation}
	2b - m a = \alpha,
\label{eqn:Cond1}
\end{equation}
which is the consistency condition. Since there are two unknowns: $a=a(\alpha,m)$ and $b=b(\alpha,m)$ we need yet another equation. This comes from the initial-boundary conditions and we assume that initially the medium is dry (\ref{eqn:CondPDE}) which expressed in the self-similar variables (\ref{eqn:SelfSimialarVars}) yields
\begin{equation}
	U(\infty) = 0.
\label{eqn:InitialCondSelfSimilar}
\end{equation}
Furthermore, for the boundary conditions we have several self-similar physically relevant choices. For example, we can consider the Dirichlet
\begin{equation}
	u(0,t) = 1, \quad t>0,
\label{eqn:Dirichlet}
\end{equation}
which describes constant concentration, Neumann 
\begin{equation}
	u^m(0,t) u_x(0,t) = -1, \quad t>0,
\label{eqn:Neumann}
\end{equation}
describing constant flux, or Robin problem
\begin{equation}
	u^m(0,t)u_x(0,t) = -u(0,t), \quad t>0,
\label{eqn:Robin}
\end{equation}
which translates into requirement that the flux is proportional to the concentration at the interface $x = 0$. For a physical account of these models see \cite{Plo15}. Boundary conditions provide us with the second equation for $a$ and $b$ and eventually we are able to write (\ref{eqn:DiffEqPDE}) as
\begin{equation}
	\frac{d}{d\eta}\left(U^m \frac{dU}{d\eta}\right) = A I^{a, 1-\alpha}_{-\frac{1}{b}} U - B \eta \frac{d}{d\eta} I^{a, 1-\alpha}_{-\frac{1}{b}} U, \quad 0<\alpha< 1,
\label{eqn:SelfSimilarEq}
\end{equation}
for some $A=A(\alpha,m)$ and $B=B(\alpha,m)$. The exact values of these constants for different boundary conditions are summarized in Tab. \ref{tab:AB}. Note that always $A,B\geq 0$. 

\begin{table}
	\centering
	\begin{tabular}{rcccc}
		\toprule
		Condition & $a$ & $A$ & $b=B$ & $\eta^*$  \\
		\midrule
		Dirichlet (\ref{eqn:Dirichlet}) & 0 & $1-\alpha$ & $\dfrac{\alpha}{2}$ & $\dfrac{1}{\sqrt{y(1)^m}}$  \vspace{5pt}\\
		Neumann (\ref{eqn:Neumann}) & $\dfrac{\alpha}{m+2}$ & $1-\dfrac{m+1}{m+2}$ & $\dfrac{m+1}{m+2}\alpha$ & $\left(\dfrac{m+1}{(y^{m+1})'(1)}\right)^{\frac{m}{m+2}}$ \vspace{5pt} \\
		Robin (\ref{eqn:Robin}) & $\dfrac{\alpha}{m}$ & $1-\dfrac{m-1}{m}\alpha$ & $\alpha$ & $\dfrac{m}{(y^m)'(1)}$ \\
		\bottomrule
	\end{tabular}
	\caption{Values of $a$, $b$ in (\ref{eqn:SelfSimialarVars}), $A=A(\alpha,m)$, $B=B(\alpha,m)$ as in (\ref{eqn:SelfSimilarEq}), and the wetting front $\eta^*$ defined in (\ref{eqn:CompactSupport}) for different types of boundary conditions. }
	\label{tab:AB}
\end{table}

To motivate our further reasoning let us focus on the Robin problem (\ref{eqn:Robin}) for which
\begin{equation}
	a = \frac{\alpha}{m}, \quad b = \alpha.
\label{eqn:ab}
\end{equation} 
Other conditions have been considered in \cite{Plo15,Plo17a} and lead to a similar final integral equation. In any of the considered boundary conditions the support of the solution will be compact (for a proof see \cite{Dji18a} and for the classical case \cite{Atk71}). Physically, this fact is equivalent to a finite speed of wetting front propagation and arises in the degeneracy of the equation (\ref{eqn:DiffEqPDE}). Having that in mind, let $\eta^*>0$ be such that
\begin{equation}
	U(\eta) = 0 \quad \text{for} \quad \eta>\eta^*.
\label{eqn:CompactSupport}
\end{equation}
Moreover, by integrating (\ref{eqn:SelfSimilarEq}) one can show that there is no flux through the support's boundary (see \cite{Plo17a})
\begin{equation}
	U(\eta)^m \frac{d U}{d\eta}(\eta) = 0 \quad \text{for} \quad \eta>\eta^*.
\label{eqn:CompactSupportFlux}
\end{equation}
Note that $\eta^*$ is a-priori unknown and has to be determined as a part of the solution since we are dealing with a free-boundary problem. A standard way of proceeding is to introduce yet another transformation
\begin{equation}
	U(\eta) = C y(z), \quad z = 1-\frac{\eta}{\eta^*}, \quad z\geq 0,
\label{eqn:TransformationY}
\end{equation}
where $C$ has to be determined. The conditions (\ref{eqn:CompactSupport})-(\ref{eqn:CompactSupportFlux}) now become
\begin{equation}
	y(0) = 0, \quad y^m(0) y'(0) =0,
\label{eqn:InitialConditionY}
\end{equation}
where prime denotes the differentiation with respect to $z$. The Robin condition (\ref{eqn:Robin}) now yields
\begin{equation}
	U^m(0) \frac{dU}{d\eta} (0) = - U(0) \rightarrow (y^m)'(1) = \frac{m\eta^*}{C^m},
\label{eqn:RobinY}
\end{equation}
which gives us one equation to determine $\eta^*$ and $C$. 

When we plug (\ref{eqn:TransformationY}) into (\ref{eqn:SelfSimilarEq}) we obtain
\begin{equation}
	\frac{C^m}{(\eta^*)^2}  \left(y^m y'\right)' = A G_\alpha y + B(1-z) (G_\alpha y)',
\label{eqn:EquationY}
\end{equation}
where for $a$ and $b$ as in (\ref{eqn:ab}) we have defined
\begin{equation}
\begin{split}
	G_\alpha y(z) &= \frac{1}{\Gamma(1-\alpha)} \int_{(1-z)^{1/b}}^1 (1-s)^{-\alpha} s^a y(1-s^{-b}(1-z))ds \\
	&= \frac{1}{b} \frac{1}{\Gamma(1-\alpha)} \left(1-z\right)^{\frac{a+1}{b}} \int_0^{z} \left(1-\left(\frac{1-z}{1-w}\right)^\frac{1}{b}\right)^{-\alpha} \frac{y(w)}{(1-w)^{1+\frac{a+1}{b}}}dw.
\end{split}
\label{eqn:G}
\end{equation}
The second equality above follows from the change of variables $w = 1-s^{-a}(1-z)$. We can thus see that 
\begin{equation}
	C = \left(\eta^*\right)^\frac{2}{m},
\label{eqn:CEqn}
\end{equation}
which together with (\ref{eqn:RobinY}) closes the system and gives us an equation for $\eta^*$ provided we know the solution $y$, that is $\eta^*$ can be found from
\begin{equation}
	\eta^* = \frac{m}{(y^m)'(1)}.
\end{equation}

Now, (\ref{eqn:EquationY}) can be transformed into a nonlinear integral Volterra equation. First, with an integration and using the condition (\ref{eqn:InitialConditionY}) we obtain
\begin{equation}
\begin{split}
	y(z)^m y'(z) 
	&= A \int_0^z G_\alpha y(s)ds + B \int_0^z \left (1-z)(G_\alpha y(s)\right)' ds \\
	&= B (1-z)G_\alpha y(z) + (A+B)\int_0^z G_\alpha y(s)ds,
\end{split}
\label{eqn:yy'}
\end{equation}
where we have integrated by parts. A second integration finally yields
\begin{equation}
\begin{split}
	y(z)^{m+1} &= (m+1) \int_0^z \left(B(1-s)+(A+B)\left(z-s\right)\right) G_\alpha y(s) ds \\
	&=: \int_0^z F(z,s) G_\alpha y(s) ds.
\end{split}
\label{eqn:VolterraEquationF}
\end{equation}
Next, we use the definition of $G_\alpha$ operator (\ref{eqn:G}) and change the order of integration (Fubini's theorem)
\begin{equation}
	y(z)^{m+1} = \frac{1}{b}\frac{1}{\Gamma(1-\alpha)}\int_0^{z} \left(\int_{u}^z F(z,s) (1-s)^{\frac{a+1}{b}} \left(1-\left(\frac{1-s}{1-u}\right)^\frac{1}{b}\right)^{-\alpha}ds\right) \frac{y(u)du}{(1-u)^{1+\frac{a+1}{b}}}.
\end{equation}
We can now substitute back $v = ((1-s)/(1-u))^{1/b}$ inside the inner integral to obtain
\begin{equation}
\begin{split}
	y(z)^{m+1} 
	&= \frac{1}{\Gamma(1-\alpha)} \int_0^{z} \left[ \int_{\left(\frac{1-z}{1-u}\right)^{1/b}}^{1} F(z, 1-s^b(1-u)) (1-s)^{-\alpha} s^{a+b} ds\right] y(u) du \\
	&=: \int_{0}^z K(z,u) y(u) du,
\end{split}
\label{eqn:VolterraRobin}
\end{equation}
where we have defined the kernel $K$. Observe that it can be written as a combination of incomplete beta functions (and we use this fact later). Moreover, the kernel is a continuous function since $(1-s)^{-\alpha}$ is integrable near $s=1$ and $a>0$. We can also obtain some simple bounds on $K$ due to the fact that $F(z,s)$ is a linear function in its both variables. Assume that $0\leq z\leq X$ for some $0<X<1$. Then,
\begin{equation}
	B(1-X) \leq F(z,s) \leq A+2B,
\end{equation}
and hence
\begin{equation}
	K(z,u) \leq \frac{m+1}{\Gamma(1-\alpha)} (A+2B) \int_{\left(\frac{1-z}{1-u}\right)^{1/b}}^1 (1-s)^{-\alpha} ds = \frac{m+1}{\Gamma(2-\alpha)} (A+2B) \left(1-\left(\frac{1-z}{1-u}\right)^{1/b}\right)^{1-\alpha}.
\end{equation}
Now, we have
\begin{equation}
	\frac{1-z}{1-s} = 1 - \frac{z-s}{1-s},
\end{equation}
what together with elementary estimate $(1-x)^\gamma \geq 1-\gamma x$ for $\gamma\geq 1$ leads to
\begin{equation}
	K(z,u) \leq \frac{m+1}{\Gamma(2-\alpha)} (A+2B) \left(\frac{1}{b}\frac{z-u}{1-X}\right)^{1-\alpha} \leq K_+ (z-u)^{1-\alpha},
\label{eqn:K+}
\end{equation}
because $0\leq u\leq z\leq X$. Similarly, we can obtain bounds from below. To this end, notice that 
\begin{equation}
\begin{split}
	K(z,u) 
	&\geq \frac{m+1}{\Gamma(1-\alpha)} B(1-X) \left(\frac{1-z}{1-u}\right)^{a/b+1} \int_{\left(\frac{1-z}{1-u}\right)^{1/b}}^1 (1-s)^{-\alpha} ds \\
	&= \frac{m+1}{\Gamma(2-\alpha)} B(1-X) \left(\frac{1-z}{1-u}\right)^{a/b+1} \left(1-\left(\frac{1-z}{1-u}\right)^{1/b}\right)^{1-\alpha}.
\end{split}
\end{equation}
Using the convexity bound $(1-x)^\gamma \leq 1-x$ for $\gamma\geq 1$ we now have
\begin{equation}
	K(z,u) \geq \frac{m+1}{\Gamma(2-\alpha)} B(1-X) \left(\frac{1-z}{1-u}\right)^{a/b+1} \left(\frac{z-u}{1-u}\right)^{1-\alpha}\geq K_- (z-u)^{1-\alpha},
\end{equation} 
since $0\leq u\leq z\leq X$. Whence, we have found the behaviour of the kernel yielding
\begin{equation}
	K_- (z-u)^{1-\alpha} \leq K(z,u) \leq K_+ (z-u)^{1-\alpha},
\label{eqn:KernelBounds}
\end{equation}
where $K_\pm$ are known constants. These kernel bounds have important consequence for the form of the solution and are crucial for our subsequent construction of the numerical method. For example, as was shown in \cite{Buc05} (for a further account see also \cite{Gri81,Bus90,Okr95}) the equation (\ref{eqn:VolterraRobin}) has only one non-trivial solution which satisfies
\begin{equation}
	C_-^\frac{1}{m} z^\frac{2-\alpha}{m} \leq y(z) \leq C_+^\frac{1}{m} z^\frac{2-\alpha}{m},
\label{eqn:YBounds}
\end{equation}
where $C_\pm>0$ can be found explicitly, however, their exact values are not relevant for our reasoning. The calculations for other boundary conditions presented in Tab. \ref{tab:AB} are essentially identical. Therefore, we have shown the following result.
\begin{thm}
	Let
	\begin{equation}
		u(x,t) = (\eta^*)^\frac{2}{m} t^a y \left(1-\frac{1}{\eta^*}\frac{x}{t^b}\right),
	\end{equation}
	where $y=y(z)$ is a unique nontrivial solution of the Volterra equation (\ref{eqn:VolterraRobin}) while $a$, $b$, and $\eta^*$ are given in Tab. \ref{tab:AB}. Then, $u$ is a unique self-similar solution of (\ref{eqn:DiffEqPDE}) with vanishing initial condition along with one of the self-similar boundary conditions (\ref{eqn:Dirichlet})-(\ref{eqn:Robin}). Moreover, we have the following estimates
	\begin{equation}
		U_- \left(1-\frac{1}{\eta^*}\frac{x}{t^b}\right)^\frac{2-\alpha}{m} \leq u(x,t) \leq U_+ \left(1-\frac{1}{\eta^*}\frac{x}{t^b}\right)^\frac{2-\alpha}{m},
	\end{equation}
	with suitable constants $U_\pm>0$ that can be found explicitly. 
\end{thm}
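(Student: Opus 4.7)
The plan is to assemble the theorem from the transformations carried out above, and then to invoke the theory of non-Lipschitz Volterra equations for the existence, uniqueness, and two-sided bounds. First I would verify that the self-similar ansatz (\ref{eqn:SelfSimialarVars}), combined with the consistency condition (\ref{eqn:Cond1}) and the chosen boundary condition from (\ref{eqn:Dirichlet})--(\ref{eqn:Robin}), reduces the PDE to (\ref{eqn:SelfSimilarEq}) with the constants $a$, $b=B$, $A$ as in Tab.~\ref{tab:AB}. Next, the substitution (\ref{eqn:TransformationY}) together with the free-boundary relation $C=(\eta^*)^{2/m}$ from (\ref{eqn:CEqn}) and the condition matching $\eta^*$ to $y$ (the last column of Tab.~\ref{tab:AB}) gives the ODE (\ref{eqn:EquationY}) under the no-flux initial data (\ref{eqn:InitialConditionY}).

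Then I would integrate (\ref{eqn:EquationY}) twice on $[0,z]$, using (\ref{eqn:InitialConditionY}) to kill the boundary terms, and apply Fubini together with the substitution $v = ((1-s)/(1-u))^{1/b}$ to bring the equation into the Volterra form (\ref{eqn:VolterraRobin}). This step has already been done for the Robin case; for Dirichlet and Neumann only the prefactors $A$, $B$, $a$, $b$ change, so the same computation goes through verbatim and produces a kernel $K(z,u)$ satisfying the two-sided bound (\ref{eqn:KernelBounds}). The equivalence between the Volterra equation and the original PDE--boundary value problem is then built into the chain of substitutions, and so a solution $y$ of (\ref{eqn:VolterraRobin}) yields the claimed $u$.

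The main obstacle is uniqueness. Because $y\mapsto y^{m+1}$ fails to be Lipschitz at the origin and $y\equiv 0$ is always a trivial solution, the standard Banach fixed-point argument does not apply directly; this is exactly where the kernel bounds (\ref{eqn:KernelBounds}) earn their keep. I would invoke the non-Lipschitz Volterra theory of \cite{Buc05} (see also \cite{Gri81,Bus90,Okr95}), which applies to equations of the form $y(z)^{m+1}=\int_0^z K(z,u)y(u)\,du$ with a kernel obeying (\ref{eqn:KernelBounds}): it delivers a unique nontrivial solution $y$ together with the power-law estimate (\ref{eqn:YBounds}). Finally, I would translate these bounds back through $U=(\eta^*)^{2/m}y$ and $z=1-\eta/\eta^*$ to obtain the stated inequalities for $u(x,t)$ with $U_\pm=(\eta^*)^{2/m}C_\pm^{1/m}$, completing the proof.
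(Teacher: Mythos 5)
Your proposal follows essentially the same route as the paper: the theorem there is stated as a summary of the preceding chain of transformations (self-similar ansatz, the substitution (\ref{eqn:TransformationY}), double integration plus Fubini to reach (\ref{eqn:VolterraRobin}), the kernel bounds (\ref{eqn:KernelBounds})), with existence, uniqueness of the nontrivial solution, and the power-law estimates (\ref{eqn:YBounds}) all imported from \cite{Buc05}, exactly as you do. Your closing identification $U_\pm=(\eta^*)^{2/m}C_\pm^{1/m}$ agrees with the paper's $U_\pm=C_\pm^{1/m}/y(1)$ in the Dirichlet case via (\ref{eqn:CEqn}), so the argument is correct as written.
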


For example, when we consider the Dirichlet boundary condition and consult Tab. \ref{tab:AB} we obtain
\begin{equation}
	K_- = \frac{m+1}{\Gamma(2-\alpha)} \frac{\alpha}{2}(1-X)^{\alpha-1}, \quad K_+ = \frac{m+1}{\Gamma(2-\alpha)} \left(\frac{\alpha}{2}(1-X)\right)^{\alpha-1},
\end{equation}
which by results from \cite{Buc05} yield
\begin{equation}
	C_\pm = K_\pm \beta\left(2-\alpha, 1+\frac{2-\alpha}{m}\right),
\end{equation}
where $\beta$ is the Euler beta function. Eventually, the solution of (\ref{eqn:DiffEqPDE}) can be bounded by
\begin{equation}
	U_\pm = \frac{1}{y(1)} C_\pm^\frac{1}{m},
\end{equation}
where $y(1)$ can be calculated from the solution of (\ref{eqn:VolterraRobin}). 

\section{Numerical method}\label{sec:NumericalMethod}
In the previous section we have shown that looking for solutions of (\ref{eqn:DiffEqPDE}) with self-similar boundary conditions (\ref{eqn:Dirichlet})-(\ref{eqn:Robin}) is equivalent to solving (\ref{eqn:VolterraRobin}). Therefore, in what follows we will devise a numerical method for solving a general class of nonlinear Volterra equations of the form 
\begin{equation}
	y(z)^{m+1} = \int_0^z K(z,s) y(s) ds, \quad 0\leq z\leq 1,
\label{eqn:VolterraY}
\end{equation}
where we assume that the kernel is continuous and there exist constants $K_\pm$ such that
\begin{equation}
	 K_- (z-s)^\gamma \leq K(z,s) \leq K_+ (z-s)^\gamma, \quad \gamma\geq 0.
\label{eqn:BoundY}
\end{equation}
Note that in this section we use the same letters for $y$ and $K$ as before, however now we are considering a \emph{general} Volterra equation which may not have anything in common with anomalous diffusion. By results from \cite{Buc05} the above equation has a unique non-trivial solution satisfying
\begin{equation}
	C_-^\frac{1}{m} z^\frac{\gamma+1}{m} \leq y(z) \leq C_+^\frac{1}{m} z^\frac{\gamma+1}{m},
\end{equation}
for suitable constants $C_\pm>0$. Therefore, the behaviour of the solution is of power type and for a numerical treatment it is reasonable to peel it off from the actual form of $y$. That is to say, we substitute
\begin{equation}
	y(z) = z^\frac{\gamma+1}{m} v(z),
\end{equation}
which leads to an equivalent integral equation
\begin{equation}
	v(z) = z^{-\frac{(m+1)(\gamma+1)}{m}} \int_0^z K(z,s) s^\frac{\gamma+1}{m} v(s) ds.
\label{eqn:VolterraEqV}
\end{equation}
It is crucial to note that $v$ is now bounded away from zero, that is
\begin{equation}
	0 < C_-^\frac{1}{m} \leq v(z) \leq C_+^\frac{1}{m},
\label{eqn:BoundsV}
\end{equation}
what facilitates both the analysis and numerical computations. A particular choice of the kernel, i.e. when $K(z,s) = K_+ (z-s)^\gamma$ clearly leads to a constant solution
\begin{equation}
	v(z) = \left(K_+ \beta\left(\gamma+1,\frac{\gamma+1}{m}+1\right)\right)^\frac{1}{m},
\label{eqn:ConstantSolution}
\end{equation}
where $\beta$ is Euler beta function. This simple solution can serve as a benchmark of various numerical methods. 

\subsection{Construction}
Now we can proceed to the discretization. Introduce the grid
\begin{equation}
	z_n := \frac{n}{N}, \quad h:=\frac{1}{N}, \quad n=0,1,...,N,
\end{equation}
where the total number of grid points $N$ is fixed. Further, we can discretize the integral in (\ref{eqn:VolterraEqV})
\begin{equation}
	\int_0^{z_n} K(z_n,s) s^\frac{\gamma+1}{m} v(s) ds = \sum_{i=0}^{n-1} \left(w_{n,i}(h) v(z_i) + \delta_i(h)\right),
\label{eqn:Quadrature}
\end{equation}
where $\delta_i(h)$ is the local consistency error, and $w_{n,i}(h)$ are weights that depend on the form of the kernel. When we use (\ref{eqn:Quadrature}) in (\ref{eqn:VolterraEqV}) and truncate the remainder we obtain the following numerical scheme
\begin{equation}
	v_n^{m+1} = z_n^{-\frac{(m+1)(\gamma+1)}{m}} \sum_{i=0}^{n-1} w_{n,i}(h) v_i,
\label{eqn:NumMet}
\end{equation}
where by $v_i$ we have denoted the numerical approximation to $v(z_i)$ while $K_{n,i} = K(z_n,z_i)$. Note also that in the quadrature (\ref{eqn:Quadrature}) we have not included the last point of the interval $[0,z_n]$. This leads to a (half-)open quadrature and has been chosen in order to reduce the computational cost of the method. For if the sum on the right-hand side of the above included $v_n$ term we would obtain an implicit method that would require solving a nonlinear equation of the form $x^{m+1}-a_1 x + a_0 = 0$ in each step. In order to keep the cost as low as possible we consider only explicit methods. 

Different quadratures would yield different values of the weights $w_{n,i}(h)$. For example, the simplest rectangular rule in which the whole integrand is approximated by its value at the lower terminal would yield
\begin{equation}
	w_{n,i}(h) = h K(z_n,z_i) z_i^{\frac{\gamma+1}{m}}.	
\label{eqn:RectangleSimplest}
\end{equation}
This method could be proved to be convergent (see \cite{Plo19a}), however, it does not solve the constant case exactly, that is to say when $K(z,s) = K_+ (z-s)^\gamma$ the numerical solution $v_n$ is not equal to (\ref{eqn:ConstantSolution}). To see this suppose that $v_0 = v_1 = v_2 = C$, then from (\ref{eqn:NumMet}) at $n=2$ and (\ref{eqn:RectangleSimplest})
\begin{equation}
	C^{m+1} = C K_+ h^{-\frac{(m+1)(\gamma+1)}{m}+1} (2h - h)^\gamma h^{\frac{\gamma+1}{m}} = C K_+,
\end{equation}
hence $C = (K_+)^{1/m}$. Then, the third step yields
\begin{equation}
	v_3^{m+1} = K_+ h^{-\frac{(m+1)(\gamma+1)}{m}+1} \left((3h-h)^\gamma h^{\frac{\gamma+1}{m}} + (3h-2h)^\gamma (2h)^{\frac{\gamma+1}{m}}\right)= K_+ \left(2^\gamma + 1\right) \neq C^{m+1},
\end{equation}
for $\gamma > 0$. Therefore, $v_n$ cannot be constant for all $n \geq 3$. An ability of a numerical scheme to be able to solve for a constant solution can be thought as a necessary requirement that we have to make since it would accurately resolve the zero order Taylor series term. A family of such methods can be devised by using an interpolating polynomial for the unknown $v_n$ in the integral (\ref{eqn:Quadrature}). The kernel, since it is known, is not approximated. Although in this work we will focus only on the first degree interpolation, i.e. a linear reconstruction, it is instructive to see how does the zeroth order approximation look like. To this end we write
\begin{equation}
	\int_0^{z_n} K(z_n,s) s^\frac{\gamma+1}{m} v(s) ds = \sum_{i=0}^{n-1} \int_{z_i}^{z_{i+1}} K(z_n,s) s^{\frac{\gamma+1}{m}} v(s) ds,
\end{equation}
in which we expand $v$ in its Taylor series $v(s) = v(z_i) + v'(\widehat{\xi}_i) (s-z_i)$, which leads to
\begin{equation}
	\int_0^{z_n} K(z_n,s) s^\frac{\gamma+1}{m} v(s) ds = \sum_{i=0}^{n-1} \left[\left(\int_{z_i}^{z_{i+1}} K(z_n,s) s^{\frac{\gamma+1}{m}}ds\right) v_i + \delta_i(h)\right],
\end{equation}
where by the mean value theorem the remainder is
\begin{equation}
	\delta_i(h) = v'(\xi_i) \int_{z_i}^{z_{i+1}} K(z_n,s) s^{\frac{\gamma+1}{m}} (s-z_i)ds.
\end{equation}
Therefore, going back to (\ref{eqn:VolterraEqV}) gives
\begin{equation}
	v(z_n)^{m+1} = z_n^{-\frac{(m+1)(\gamma+1)}{m}} \sum_{i=0}^{n-1} \left(\int_{z_i}^{z_{i+1}} K(z_n,s) s^{\frac{\gamma+1}{m}}ds\right) v_i + \delta_n(h),
\end{equation}
where the remainder satisfies
\begin{equation}
\begin{split}
	|\delta_n(h)| \leq z^{-\frac{(m+1)(\gamma+1)}{m}} \sum_{i=0}^{n-1} |\delta_i(h)| &=
	z_n^{-\frac{(m+1)(\gamma+1)}{m}} |v'(\xi)| \sum_{i=0}^{n-1} \int_{z_i}^{z_{i+1}} K(z_n,s) s^{\frac{\gamma+1}{m}} (s-z_i)ds \\ &\leq 
	z_n^{-\frac{(m+1)(\gamma+1)}{m}} K_+ |v'(\xi)| h \sum_{i=0}^{n-1} \int_{z_i}^{z_{i+1}} (z_n-s)^\gamma s^{\frac{\gamma+1}{m}} ds.
\end{split}
\end{equation}
where, once again, we have used the mean value theorem and used (\ref{eqn:KernelBounds}). Further, with a substitution $s = nh \sigma$ we can evaluate the integral
\begin{equation}
	|\delta_n(h)| \leq K_+ \|v'(\xi)\| \beta\left(\gamma+1, \frac{\gamma+1}{m}\right)h = O(h), \quad h\rightarrow 0^+,
\end{equation}
uniformly for $n\in \mathbb{N}$. Whence, the truncation error of the method is of the first order. Here, $\beta(\cdot,\cdot)$ is the Euler beta function. Neglecting the remainder we obtain a rectangle method for solving (\ref{eqn:VolterraEqV})
\begin{equation}
	v_n^{m+1} = z_n^{-\frac{(m+1)(\gamma+1)}{m}} \sum_{i=0}^{n-1} \left(\int_{z_i}^{z_{i+1}} K(z_n,s) s^{\frac{\gamma+1}{m}}ds\right) v_i.
\label{eqn:Rectangle}
\end{equation}
In contrast with (\ref{eqn:RectangleSimplest}) the above method solves for the constant solution exactly by the very construction (in that case $v'\equiv 0$ and the remainder vanishes). 

It is now a straightforward exercise to develop higher order methods with the use of the higher degree of Lagrange's interpolating polynomial. We will analyse only the second order method since we are able to equip it with a suitable choice of initial conditions. As we shall see below, for higher order methods a prescription of starting values is not a straightforward task. This situation is similar to multistep methods for ODEs. Having that in mind we can use a linear approximation to $v$ in the integral in (\ref{eqn:Quadrature}). The important point is to construct an explicit method by not including the terminal point of the interval into the interpolation. That is to say, we have to partition the interval $[0,z_n]$ into subintervals which contain exactly three nodes. Then, we construct a linear interpolation based on first two nodes. Due to this construction we have to consider separately cases when $n$ is even or odd.

First, suppose that $n$ is even. Then, we build linear approximations based on the first and second points of a three node interval starting with $z = 0$. That is to say, we have the partition
\begin{equation}
	\int_0^{z_n} K(z_n,s) s^\frac{\gamma+1}{m} v(s) ds = \sum_{i=0}^{\frac{n}{2}-1} \int_{z_{2i}}^{z_{2i+2}} K(z_n,s) s^{\frac{\gamma+1}{m}} v(s) ds,
\end{equation}
in which we use Lagrange's interpolation 
\begin{equation}
	v(s) = \left(1-\frac{s-z_{2i}}{h}\right)v(z_{2i}) + \frac{s-z_{2i}}{h} v(z_{2i+1}) + \frac{1}{2}v''(\widehat{\xi}_i) (s-z_{2i})(s-z_{2i+1}).
\end{equation}
Here, $\widehat{\xi}_i$ is the intermediate point needed in the remainder. If we use that in the integral we obtain
\begin{equation}
\begin{split}
	\int_0^{z_n} K(z_n,s) s^\frac{\gamma+1}{m} v(s) ds 
	&= \sum_{i=0}^{\frac{n}{2}-1} w_{n,2i}(h) v(z_{2i}) + w_{n,2i+1}(h) v(z_{2i}) \\
	&+\frac{1}{2}v''(\xi) \sum_{i=0}^{\frac{n}{2}-1} \int_{z_{2i}}^{z_{2i+2}} K(z_n,s) s^{\frac{\gamma+1}{m}}(s-z_{2i})(s-z_{2i+1}) ds,
\end{split}
\label{eqn:SecondOrderEven}
\end{equation}
where we have utilized the mean value theorems for integrals and sums. The weights are given by
\begin{equation}
	w_{n,i}(h) = \int_{z_i}^{z_{i+2}} K(z_n,s) s^{\frac{\gamma+1}{m}}
	\left( 
	\begin{cases}
		1-\frac{s-z_{i}}{h}, & i \text{ even}\\
		\frac{s-z_{i}}{h}, & i \text{ odd}
	\end{cases}
	\right) ds
	\quad \text{for } n \text{ even and } 0\leq i\leq n-2.
\label{eqn:WeightsEven}
\end{equation}
A similar calculation can be conducted for odd values of $n$. In that case, however, we first linearly interpolate between $z_0$ and $z_1$ and then partition the rest of the interval $[z_1,z_n]$ into three node subintervals. We then have
\begin{equation}
	\int_0^{z_n} K(z_n,s) s^\frac{\gamma+1}{m} v(s) ds = \int_0^{z_1} K(z_n,s) s^\frac{\gamma+1}{m} v(s) ds + \sum_{i=1}^{\frac{n-1}{2}}\int_{z_{2i-1}}^{z_{2i+1}} K(z_n,s) s^\frac{\gamma+1}{m} v(s) ds.
\end{equation}
Further, conducting analogous interpolation as in the even case leads to
\begin{equation}
\begin{split}
	\int_0^{z_n} & K(z_n,s) s^\frac{\gamma+1}{m} v(s) ds 
	= w_{n,0}(h) v(0)+\sum_{i=1}^{\frac{n-1}{2}} w_{n,2i-1}(h) v(z_{2i-1}) + w_{n,2i}(h) v(z_{2i}) \\
	&+ \frac{1}{2}v''(\xi_1)\int_0^{z_1} K(z_n,s) s^{1+\frac{\gamma+1}{m}}(s-z_1) ds + \sum_{i=1}^{\frac{n-1}{2}} \int_{z_{2i-1}}^{z_{2i+1}} K(z_n,s) s^{\frac{\gamma+1}{m}}(s-z_{2i-1})(s-z_{2i}) v''(\xi_i(s)) ds,
\end{split}
\label{eqn:SecondOrderOdd}
\end{equation}
where $\xi_s$ is an intermediate point. In that case, the weights are given by
\begin{equation}
	\begin{split}
	w_{n,0}(h) = \int_{0}^{z_{1}} K(z_n,s) s^{\frac{\gamma+1}{m}}\left(1-\frac{s}{h}\right)ds, \\
	w_{n,1}(h) = \int_{0}^{z_{1}} K(z_n,s) s^{\frac{\gamma+1}{m}}\frac{s}{h}ds + \int_{z_1}^{z_3} K(z_n,s) s^{\frac{\gamma+1}{m}}\left(1-\frac{s-z_{1}}{h}\right)ds, \\
	w_{n,i}(h) = \int_{z_{i-1}}^{z_{i+1}} K(z_n,s) s^{\frac{\gamma+1}{m}}
	\left( 
	\begin{cases}
		\frac{s-z_{i-1}}{h}, & i \text{ even} \\
		1-\frac{s-z_{i-1}}{h}, & i \text{ odd} \\
	\end{cases}
	\right) ds
	\quad \text{for } n \text{ odd and } 1< i\leq n-1.
	\end{split}
\label{eqn:WeightsOdd}
\end{equation}
Therefore, the whole equation (\ref{eqn:VolterraEqV}) using our linear reconstruction can be written as
\begin{equation}
	v(z_n)^{m+1} = z_n^{-\frac{(m+1)(\gamma+1)}{m}} \sum_{i=0}^{n-1} w_{n,i}(h) v(z_i) + \delta_n(h),
\label{eqn:SecondOrderExact}
\end{equation}
where the remainder can be estimated with the help of (\ref{eqn:SecondOrderEven}) and (\ref{eqn:SecondOrderOdd}) to be
\begin{equation}
	|\delta_n(h)| \leq K_+\|v''\| B\left(\gamma+1, \frac{\gamma+1}{m}\right) h^2 =: C h^2 = O(h^2) \quad \text{as} \quad h\rightarrow 0^+,
\label{eqn:SecondOrderRemainder}
\end{equation}
uniformly for $n\in\mathbb{N}$. Therefore, by truncating the remainder we obtain a consistent second order method in the form (\ref{eqn:NumMet}) where weights are defined in (\ref{eqn:WeightsEven}) and (\ref{eqn:WeightsOdd}). 

Having designed a method we can move to the important question about the initial conditions for $v$. Since (\ref{eqn:VolterraY}) always possesses a trivial solution $y\equiv 0$ we have to choose an appropriate starting value for our numerical method in order to converge to a non-trivial one. One way of doing that is to approximate the integral in (\ref{eqn:VolterraEqV}) over the interval $z\in[0,h]$ with some simple quadrature. For example, we can use the rectangle rule in which we take the value of $v$ at the \emph{right} endpoint. That is, we reconstruct $v$ with a constant function
\begin{equation}
	v(s) = v(h) - v'(\xi_h)(h-s), \quad s\in(0,h),
\end{equation} 
for some $\xi_h \in (0,h)$. Then, from (\ref{eqn:VolterraEqV}) we can write
\begin{equation}
	v(h)^{m+1} = h^{-\frac{(m+1)(\gamma+1)}{m}} v(h)\int_0^h K(h,s) s^\frac{\gamma+1}{m} ds + R(h),
\label{eqn:InitialStep0}
\end{equation}
where by the mean value theorem the remainder satisfies
\begin{equation}
	|R(h)| = h^{-\frac{(m+1)(\gamma+1)}{m}} |v'(\widehat{\xi}_h)|\int_0^h K(h,s) (h-s) s^\frac{\gamma+1}{m} ds \leq K_+ |v'(\widehat{\xi}_h)| B\left(\gamma+2, \frac{\gamma+1}{m}+1\right) h,
\end{equation}
where we have used the assumption of kernel boundedness (\ref{eqn:KernelBounds}). Therefore, after substitution $s=z \sigma$ in (\ref{eqn:InitialStep0}) we have
\begin{equation}
	v(h)^{m+1} -v(h) h^{-\gamma} \int_0^1 K(h,h\sigma) \sigma^\frac{\gamma+1}{m} d\sigma= R(h).
\end{equation}
Now, the left-hand side of the above is bounded from (\ref{eqn:KernelBounds}) while the right-hand side vanishes to zero when $h\rightarrow 0^+$. We can then propose that the starting step for the numerical method is
\begin{equation}
	v_0 = v(0) = \lim\limits_{h\rightarrow 0^+}\left(h^{-\gamma} \int_0^1 K(h,h\sigma) \sigma^\frac{\gamma+1}{m} d\sigma\right)^\frac{1}{m},
\label{eqn:InitialStep}
\end{equation}
provided the above limit exists. We thus assume that it is the case since otherwise, the solution could be unnecessarily difficult or even meaningless to solve numerically. Note that, taking the above as an initial step in the scheme does not introduce any error apart from numerical rounding. In practice we can use for example the Gaussian quadrature to compute the integral above with finite $h$ fixed to be the numerical method step. 

The initial step (\ref{eqn:InitialStep}) is sufficient to start the rectangle scheme (\ref{eqn:Rectangle}) however, in order to initialize the second order trapezoidal method we need a guess on the value of $v(h)$. A straightforward idea is to once again use the linear interpolation between $v(0)$ and $v(h)$ similarly as in the odd case of method's construction. But in present case it yields $v(h)$ implicitly
\begin{equation}
\begin{split}
	v(h)^{m+1} &= h^{-\frac{(m+1)(\gamma+1)}{m}} \int_0^h K(h,s) s^\frac{\gamma+1}{m} v(s) ds \\
	&= h^{-\frac{(m+1)(\gamma+1)}{m}} \left[v(0) \int_0^h K(h,s) s^\frac{\gamma+1}{m} \left(1-\frac{s}{h}\right) ds + v(h) \int_0^h K(h,s) s^\frac{\gamma+1}{m} \frac{s}{h}ds\right] + R(h),
\end{split}
\end{equation} 
where, similarly as before, the remainder is $R(h) = O(h^2)$ as $h\rightarrow 0^+$. After truncation, the second initial condition $v_1$ can be found by solving the nonlinear equation
\begin{equation}
	 h^{\frac{(m+1)(\gamma+1)}{m}} v_1^{m+1} - \left( \int_0^h K(h,s) s^\frac{\gamma+1}{m} \frac{s}{h}ds \right) v_1 - \left(\int_0^h K(h,s) s^\frac{\gamma+1}{m} \left(1-\frac{s}{h}\right) ds \right) v_0 = 0.
\label{eqn:InitialStep1}
\end{equation}
Observe that solving the above is required only once at the initialization phase of the second order method. Similarly, as with the computation of $v_0$ we can use Gaussian quadrature to approximate the above integrals. Moreover, since the the above nonlinear equation is of the form $a x^{m+1} - b x - c = 0$ with a known derivative, we can readily use Newton's method to solve it. Therefore, (\ref{eqn:InitialStep}) and (\ref{eqn:InitialStep1}) supply us with at worst a second order approximation of the starting values for the scheme (\ref{eqn:NumMet}) along with weights (\ref{eqn:WeightsEven}) and (\ref{eqn:WeightsOdd}). 

\subsection{Convergence}
Before we proceed to the convergence proof of (\ref{eqn:NumMet}) with weights (\ref{eqn:WeightsEven}) and (\ref{eqn:WeightsOdd}) we present two auxiliary results that will be used later. The first one is a variation on a discrete version of the classical Gr\"onwall-Bellman's lemma. These lemmas are essential in studying convergence of numerical methods for partial differential equations in both the classical and fractional setting. In the literature one can find many different versions of them tailored for specific needs. For example, they are an essential tool in the important case of subdiffusion \cite{Lia19} when they are usually used to solve the error inequality. In the cited work authors prove a general case of the discrete fractional Gr\"onwall-Bellman's inequality where they allow for general weights present in the sum. This generality helps to tackle convergence of a variety of numerical schemes based on, for example, nonuniform L1 method or Alikhanov's discretisations. Other interesting results can be found in \cite{Li18, McK82}.

\begin{lem}\label{lem:GB}
Let $\left\{e_n\right\}$, $n=1,2,...$ be a sequence of positive numbers satisfying 
\begin{equation}
	e_n \leq \frac{\mu}{n} \sum_{i=1}^{n-1} e_i + \delta, \quad n\geq 1,
	\label{eqn:LemRecur}
\end{equation}
where $\mu>0$ and $\delta>0$. Then, we have
\begin{equation}
	e_n \leq \delta f_n,
\end{equation}
where 
\begin{equation}
	f_n = \frac{\Gamma(n+\mu)}{n!} \sum_{k=0}^{n-1} \frac{k!}{\Gamma(k+1+\mu)}.
\label{eqn:LemmaBound}
\end{equation}
Moreover, 
\begin{equation}
	f_n \begin{cases}
		\leq \dfrac{1}{1-\mu}, & 0<\mu<1, \\
		\leq \ln n + 1, & \mu = 1, \\
		\stackrel{n\rightarrow\infty}{\sim} \dfrac{1}{\Gamma(\mu+1)}\dfrac{n^{\mu-1}}{\mu-1}, & \mu > 1. \\
	\end{cases}
\label{eqn:LemmaBoundF}
\end{equation}
\end{lem}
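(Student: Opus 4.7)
The plan is to prove the bound $e_n \leq \delta f_n$ by strong induction, then derive the explicit estimates in (\ref{eqn:LemmaBoundF}) from a two-term recurrence and an integral representation of the sum in (\ref{eqn:LemmaBound}). The first step is to identify $f_n$ as the unique solution of the equality analog of (\ref{eqn:LemRecur}), namely
\begin{equation*}
	\tilde f_n = \frac{\mu}{n}\sum_{i=1}^{n-1}\tilde f_i + 1, \quad \tilde f_1 = 1.
\end{equation*}
Multiplying by $n$, subtracting the analogous identity for $n-1$, and telescoping the sum yields the two-term linear recurrence
\begin{equation*}
	n\tilde f_n = (n-1+\mu)\tilde f_{n-1} + 1, \quad n \geq 2.
\end{equation*}
Introducing $u_n := n!\,\tilde f_n/\Gamma(n+\mu)$ as integrating factor converts this into the pure difference $u_n - u_{n-1} = (n-1)!/\Gamma(n+\mu)$, which integrates explicitly and reproduces the closed form (\ref{eqn:LemmaBound}); hence $\tilde f_n = f_n$.

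With this equality in hand, the induction on the original inequality is clean. For $n=1$ the sum in (\ref{eqn:LemRecur}) is empty, so $e_1 \leq \delta = \delta f_1$. Assuming $e_i \leq \delta f_i$ for all $i<n$, substituting into (\ref{eqn:LemRecur}) and using the identity satisfied by $f_n$ gives $e_n \leq \delta\bigl(\tfrac{\mu}{n}\sum_{i=1}^{n-1} f_i + 1\bigr) = \delta f_n$.

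For the explicit bounds I would work case by case using the two-term recurrence $f_n = \tfrac{n-1+\mu}{n}f_{n-1} + \tfrac{1}{n}$. When $0 < \mu < 1$ the coefficient $(n-1+\mu)/n$ lies strictly below one, and a one-line induction shows $f_{n-1}\leq 1/(1-\mu)$ forces $f_n\leq \tfrac{n-1+\mu}{n}\cdot\tfrac{1}{1-\mu}+\tfrac{1}{n} = \tfrac{1}{1-\mu}$, with base case $f_1 = 1 \leq 1/(1-\mu)$. For $\mu=1$ the recurrence collapses to $f_n = f_{n-1} + 1/n$, so $f_n$ is the $n$th harmonic number and is bounded by $1 + \ln n$. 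For $\mu > 1$ I would rewrite the sum in (\ref{eqn:LemmaBound}) as a beta integral via $\Gamma(k+1)/\Gamma(k+1+\mu) = \Gamma(\mu)^{-1}\int_0^1 t^k(1-t)^{\mu-1}\,dt$ and sum the resulting geometric series inside the integral, obtaining
\begin{equation*}
	f_n = \frac{\Gamma(n+\mu)}{\Gamma(\mu)\,n!}\int_0^1 (1-t^n)(1-t)^{\mu-2}\,dt.
\end{equation*}
Stirling's ratio $\Gamma(n+\mu)/n! \sim n^{\mu-1}$ combined with dominated convergence (the integrand is dominated by the integrable $(1-t)^{\mu-2}$) then sends the integral to $1/(\mu-1)$ and delivers the claimed asymptotic up to the prefactor in (\ref{eqn:LemmaBoundF}).

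The main obstacle I anticipate is the $\mu > 1$ case: one must justify the interchange of summation and integration and carefully track the Gamma-function constants to match the stated asymptotic, whereas the $0 < \mu \leq 1$ cases reduce to elementary bookkeeping once the two-term recurrence is available.
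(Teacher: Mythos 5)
Your proof takes essentially the same route as the paper's: the same reduction of the sum identity to a two-term recurrence (your summation-factor substitution $u_n=n!f_n/\Gamma(n+\mu)$ is just a tidier way of iterating the product the paper writes out), the same induction, and for $\mu>1$ the same beta-integral trick (the paper passes to the infinite series and invokes Tonelli where you keep the factor $1-t^n$ and use dominated convergence — both are fine). One remark: your computation yields the constant $1/\left((\mu-1)\Gamma(\mu)\right)$, which agrees with the paper's own calculation (and with the exact value $f_n=n$ at $\mu=2$) but differs by a factor of $\mu$ from the $1/\Gamma(\mu+1)$ displayed in (\ref{eqn:LemmaBoundF}); the statement carries a typo there, so do not adjust your prefactor to match it.
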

\begin{proof}
We proceed by mathematical induction. To begin, let us observe that $e_1 \leq \delta$ by the convention that $\sum_{i=1}^{0} = 0$. Hence, $f_1 = 1$. Suppose that $e_i \leq \delta f_i$ for $1\leq i \leq n-1$. Due to this inductive assumption we have
\begin{equation}
	e_n \leq \delta \left(\frac{\mu}{n}\sum_{i=1}^{n-1} f_i + 1\right).
\label{eqn:Induction}
\end{equation}
We claim that the sequence $f_i$ defined in (\ref{eqn:LemmaBound}) satisfies the following difference equation
\begin{equation}
	\frac{\mu}{n}\sum_{i=1}^{n-1} f_i + 1 = f_n,
\label{eqn:DifferenceEquation}
\end{equation}
which combined with (\ref{eqn:Induction}) proves the inductive assertion. When we subtract (\ref{eqn:DifferenceEquation}) written for $n+1$ the equation for $f_n$ we can obtain a local equation
\begin{equation}
	(n+1)f_{n+1} - (n+\mu) f_n - 1 = 0.
\end{equation} 
Switching back to $f_n$ and rearranging we arrive at the recurrence
\begin{equation}
	f_n = \left(1+\frac{\mu-1}{n}\right) f_{n-1} + \frac{1}{n}.
\end{equation} 
The above can be iterated to yield
\begin{equation}
	f_n = \prod_{i=2}^n \left(1+\frac{\mu-1}{i}\right) + \sum_{i=0}^{n-2} \frac{1}{n-2} \prod_{j=1}^i \left(1+\frac{\mu-1}{n-j+1}\right).
\end{equation}
Further, the products can be simplified considerably when we notice that
\begin{equation}
	\prod_{i=2}^n \left(1+\frac{\mu-1}{i}\right) = \frac{(1+\mu)(2+\mu)...(n+\mu-1)}{n!} = \frac{1}{n!}\frac{\Gamma(n+\mu)}{\Gamma(\mu+1)},
\end{equation}
and similarly for the second one. Consequently, we arrive at
\begin{equation}
\begin{split}
	f_n &= \frac{\Gamma(n+\mu)}{n!}\left(\frac{1}{\Gamma(\mu+1)} + \sum_{k=2}^{n}\frac{1}{k} \frac{k!}{\Gamma(k+\mu)}\right) = \frac{\Gamma(n+\mu)}{n!}\left(\frac{1}{\Gamma(\mu+1)} + \sum_{k=1}^{n-1}\frac{k!}{\Gamma(k+1+\mu)}\right) \\ 
	&=  \frac{\Gamma(n+\mu)}{n!}\sum_{k=0}^{n-1} \frac{k!}{\Gamma(k+1+\mu)},
\end{split}
\end{equation}
which is (\ref{eqn:LemmaBound}) and solves the recurrence. 

In order to show (\ref{eqn:LemmaBoundF}) it suffices to consider the recurrence (\ref{eqn:DifferenceEquation}). Suppose that $0<\mu<1$, then $f_1 = 1 < 1/(1-\mu)$. Further, if $f_i \leq 1/(\mu-1)$ then
\begin{equation}
	f_n \leq \frac{\mu}{n} \sum_{i=1}^{n-1} \frac{1}{1-\mu} + 1 \leq \frac{\mu}{1-\mu} + 1 = \frac{1}{1-\mu}.
\end{equation}
Next, for $\mu = 1$ we trivially have $f_1 = 1 = \ln 1 + 1$, and further by inductive assumption
\begin{equation}
	f_n \leq \frac{1}{n} \sum_{i=1}^{n-1} \left(\ln i + 1\right) + 1 \leq \frac{1}{n} \int_1^n \left(\ln x + 1\right) dx + 1,
\end{equation}
where we have used the fact that the sum is bounded by an appropriate integral. Evaluating, we obtain
\begin{equation}
	f_n \leq \frac{1}{n} \left.x \ln x \right|_1^n + 1 = \ln n + 1.
\end{equation}
Next, for the case $\mu > 1$ it is convenient to use the exact formula for $f_n$, that is (\ref{eqn:LemmaBound}). Then
\begin{equation}
	f_n \sim n^{\mu-1} \sum_{k=0}^{\infty} \frac{k!}{\Gamma(k+1+\mu)} \quad \text{as} \quad n\rightarrow\infty,
\end{equation}
where we have used Stirling's formula for the prefactor and to ascertain the series convergence: $k!/\Gamma(k+1+\mu)\sim k^{\mu}$ as $k\rightarrow\infty$. We can find the exact form of the above sum by using the relation between beta and gamma functions
\begin{equation}
	\sum_{k=0}^{\infty} \frac{k!}{\Gamma(k+1+\mu)} = \frac{1}{\Gamma(\mu)} \sum_{k=0}^{\infty} \frac{\Gamma(k+1)\Gamma(\mu)}{\Gamma(k+1+\mu)} = \frac{1}{\Gamma(\mu)} \sum_{k=0}^{\infty} \int_0^1 (1-x)^{\mu-1} x^k dx,
\end{equation}
and by Tonelli's theorem we can exchange the order of summation and integration
\begin{equation}
	\sum_{k=0}^{\infty} \frac{k!}{\Gamma(k+1+\mu)} = \frac{1}{\Gamma(\mu)} \int_0^1 (1-x)^{\mu-1} \sum_{k=0}^{\infty} x^k dx = \frac{1}{\Gamma(\mu)} \int_0^1 (1-x)^{\mu-2} dx = \frac{1}{(\mu-1)\Gamma(\mu)},
\end{equation}
which proves the last assertion. 
\end{proof}
For a thorough presentation of similar results see \cite{Ame97}. Next, we prove boundedness of the numerical approximation. 

\begin{lem}\label{lem:BoundV}
Let $v_n$ be the numerical approximation of $v(z_n)$ calculated from (\ref{eqn:NumMet}). Fix sufficiently small number $\epsilon > 0$ and choose $h>0$ small enough for $(z_n)^{-(m+1)(\gamma+1)/m}\sum_{i=0}^{n-1}|\delta_i(h)|<\epsilon$. Then, for $n\geq 1$ we have
\begin{equation}
	0< \left(K_- \left(\beta\left(\gamma+1,\frac{\gamma+1}{m}+1\right)-\epsilon\right)\right)^\frac{1}{m} \leq v_n \leq \left(K_- \left(\beta\left(\gamma+1,\frac{\gamma+1}{m}+1\right)+\epsilon\right)\right)^\frac{1}{m},
\label{eqn:BoundsVn}
\end{equation}
provided that the initial step $v_0$ satisfies the same inequality. Here, $\beta(\cdot,\cdot)$ denotes the beta function.
\end{lem}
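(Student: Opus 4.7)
The plan is to argue by strong induction on $n$, using a simple but decisive conservation identity for the weights that reduces the whole problem to a comparison against the constant benchmark solution (\ref{eqn:ConstantSolution}).

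The base case is the assumed bound on $v_0$, together with the analogous bound on $v_1$ obtained from the initial-step equation (\ref{eqn:InitialStep1}); the slack $\epsilon$ is precisely what absorbs the $O(h)$ and $O(h^2)$ defects committed by the initialization formulas (\ref{eqn:InitialStep}) and (\ref{eqn:InitialStep1}), making the inductive band slightly wider than the exact constant value $(K_+\beta)^{1/m}$ on both sides. For the inductive step, assume $V^- \leq v_i \leq V^+$ for every $i<n$, where $V^\pm$ denote the two sides of (\ref{eqn:BoundsVn}).

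The key observation is that the piecewise linear interpolation underlying the weights $w_{n,i}(h)$ reproduces constant functions exactly on each subinterval of the partition, irrespectively of the parity of $n$. Applied to the constant sequence $v_i\equiv 1$ this yields the conservation identity
\begin{equation}
\sum_{i=0}^{n-1} w_{n,i}(h) \;=\; \int_0^{z_n} K(z_n,s)\, s^{\frac{\gamma+1}{m}}\, ds,
\end{equation}
so that no quadrature error appears at this stage. Invoking the kernel bounds (\ref{eqn:BoundY}) and the change of variable $s=z_n t$ one immediately gets
\begin{equation}
K_- z_n^{\frac{(m+1)(\gamma+1)}{m}} \beta\!\left(\gamma+1,\tfrac{\gamma+1}{m}+1\right) \;\leq\; \sum_{i=0}^{n-1} w_{n,i}(h) \;\leq\; K_+ z_n^{\frac{(m+1)(\gamma+1)}{m}} \beta\!\left(\gamma+1,\tfrac{\gamma+1}{m}+1\right),
\end{equation}
so the troublesome power $z_n^{-(m+1)(\gamma+1)/m}$ sitting in front of the sum in (\ref{eqn:NumMet}) cancels exactly against the power of $z_n$ coming from the beta integral.

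Closing the induction is then an arithmetic check: the upper bound $v_i\leq V^+$ combined with (\ref{eqn:NumMet}) gives $v_n^{m+1}\leq V^+\cdot K_+\beta$, and the choice $V^+=(K_+\beta)^{1/m}$ turns this into $v_n^{m+1}\leq (V^+)^{m+1}$, from which $v_n\leq V^+$ follows by monotonicity of $x\mapsto x^{1/(m+1)}$. The lower bound is obtained in an identical fashion using the kernel bound from below, which also explains why $v_n$ stays strictly positive. The slack $\epsilon$ re-enters here through the assumption $z_n^{-(m+1)(\gamma+1)/m}\sum|\delta_i(h)|<\epsilon$, which bounds the cumulative consistency defect introduced by replacing $v$ with its linear reconstruction. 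The main technical obstacle I expect is the careful bookkeeping for the odd-$n$ case, where the initial interval $[0,z_1]$ is treated separately and one has to check that the mixed partition still reproduces constants exactly; once that is verified, the argument proceeds uniformly in the parity of $n$.
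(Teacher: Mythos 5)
Your argument has the same skeleton as the paper's proof: induction on $n$, pull the inductive bounds on $v_i$ out of the quadrature sum, and identify $\sum_i w_{n,i}(h)$ with $\int_0^{z_n}K(z_n,s)s^{(\gamma+1)/m}\,ds$ so that the substitution $s=z_n t$ produces the beta function and cancels the prefactor $z_n^{-(m+1)(\gamma+1)/m}$. Where you differ is in how that identification is made. The paper bounds the kernel first and then treats $h\sum_i w_{n,i}(z_n-z_i)^\gamma z_i^{(\gamma+1)/m}$ as a quadrature for $\int_0^{z_n}(z_n-s)^\gamma s^{(\gamma+1)/m}\,ds$ with consistency defect $\sum_i\delta_i(h)$, which is exactly where the hypothesis $z_n^{-(m+1)(\gamma+1)/m}\sum_i|\delta_i(h)|<\epsilon$ and the $\pm\epsilon$ in the bounds get used. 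Your partition-of-unity observation --- the two Lagrange basis functions on each panel sum to one, hence $\sum_i w_{n,i}(h)=\int_0^{z_n}K(z_n,s)s^{(\gamma+1)/m}\,ds$ exactly for every $n\ge 2$ --- is correct (including the odd-$n$ bookkeeping, and correctly excluding $n=1$, which is why $v_1$ must be supplied by (\ref{eqn:InitialStep1})). It is cleaner than the paper's route and, if the rest went through, would yield the bounds with no $\epsilon$ in the inductive step and with $K_+$ in the upper bound; note that the printed statement has $K_-$ in both bounds, which must be a typo, since the exact constant solution (\ref{eqn:ConstantSolution}) for $K=K_+(z-s)^\gamma$ already violates an upper bound built from $K_-$ alone.

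The genuine gap is the step $\sum_i w_{n,i}(h)v_i\le V^+\sum_i w_{n,i}(h)$ (and its mirror image for the lower bound). This requires the weights to be nonnegative, and they are not: on each panel $[z_{2i},z_{2i+2}]$ the basis function $1-(s-z_{2i})/h$ attached to the left node is negative on the right half of the panel, so for instance $w_{n,0}=\int_0^{2h}K(z_n,s)s^{(\gamma+1)/m}\left(1-\frac{s}{h}\right)ds<0$ whenever $K(z_n,s)s^{(\gamma+1)/m}$ is increasing near $s=0$, which happens as soon as $(\gamma+1)/m>0$ and $z_n$ is bounded away from $0$. Equivalently, $\sum_i w_{n,i}(h)v_i=\int_0^{z_n}K(z_n,s)s^{(\gamma+1)/m}\,\Pi_h v(s)\,ds$ where $\Pi_h v$ is the piecewise-linear \emph{extrapolant} of the data, which on the right half of a panel can exceed $\max_i v_i$ by as much as $V^+-V^-$; so the induction does not close with the constants you chose, and some slack (an $\epsilon$, or a control of the extrapolation overshoot by $\|v''\|h^2$) is needed precisely at this point. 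To be fair, the paper's own proof pulls $v_i$ out of the sum in the same way and is equally silent about the sign of the weights, so this is a shared soft spot rather than a defect unique to your write-up --- but as it stands your inequality chain is not justified, and the role you assign to $\epsilon$ (absorbing only the initialization error) does not match where the slack is actually consumed.
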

\begin{proof}
We will proceed by induction. The first step is satisfied by the assumption. Suppose further that $v_i$ satisfies (\ref{eqn:BoundsVn}) for all $0\leq i<n-1$. We will show that the same is true for $v_n$. To this end, write $V_-^{1/m} \leq v_i \leq V_+^{1/m}$ where $V_\pm$ are defined in (\ref{eqn:BoundsVn}). By (\ref{eqn:NumMet}) and (\ref{eqn:KernelBounds}) we have
\begin{equation}
	v_n^{m+1} \geq V_-^\frac{1}{m} K_- z_n^{-\frac{(m+1)(\gamma+1)}{m}} h \sum_{i=0}^{n-1} w_{n,i} (z_n-z_i)^\gamma z_i^\frac{\gamma+1}{m}.
\end{equation}
Now, according to (\ref{eqn:Quadrature}) we have
\begin{equation}
	h \sum_{i=0}^{n-1} w_{n,i} (z_n-z_i)^\gamma z_i^\frac{\gamma+1}{m} = \int_0^{z_n} (z_n-s)^\gamma s^\frac{\gamma+1}{m} ds = z_n^{\frac{(\gamma+1)(m+1)}{m}} \beta\left(\gamma+1,\frac{\gamma+1}{m}+1\right) - \sum_{i=0}^{n_1}\delta_i(h).
\end{equation}
Combining the two above expressions we obtain
\begin{equation}
	v_n \geq V_-^\frac{1}{m(m+1)} \left(K_- \left( \beta\left(\gamma+1,\frac{\gamma+1}{m}+1\right) - z_n^{-\frac{(m+1)(\gamma+1)}{m}} \delta_n(h) \right)\right)^\frac{1}{m+1}.
\end{equation}
Now, since by the assumption the remainder is smaller than $\epsilon$ we can write
\begin{equation}
	v_n \geq V_-^\frac{1}{m(m+1)} \left(K_- \left( B\left(\gamma+1,\frac{\gamma+1}{m}+1\right) - \epsilon \right)\right)^\frac{1}{m+1} = V_-^\frac{1}{m(m+1)} V_-^\frac{1}{m+1} = V_-^\frac{1}{m},
\end{equation}
what completes the induction. The proof of the upper bound proceeds identically. 
\end{proof}
As it will be seen in the following main results, the boundedness from below plays a crucial role in the proof. 

A remark concerning the nature of the nonlinearity in (\ref{eqn:VolterraY}) can be revealing. Note that if we substitute $f = y^{m+1}$ then the equation transforms into
\begin{equation}
	f(z) = \int_0^z K(z,s) f(s)^\frac{1}{m+1} ds,
\end{equation}
in which the nonlinearity is manifestly nonlipschitzian. Therefore, we cannot use the general theory to conclude the convergence. To wit, for the Lipschitz nonlinearity we are always able to reduce the analysis of the method's error $e_n$ to investigations of the following recurrence inequality (see \cite{Lin85})
\begin{equation}
\label{eqn:ErrorLipschitz}
	\left|e_n\right| \leq \mu h \sum_{i=1}^{n-1} \left|e_i\right| + \delta(h),
\end{equation}
where $\mu$ and $\nu=\nu(h)$ are independent of $n$. Using the well-known discrete version of the Gr\"onwall-Bellman's lemma we can solve the above to yield (compare with our Lemma \ref{lem:GB})
\begin{equation}
	\left|e_n\right| \leq \delta(h) e^{\mu nh}.
\end{equation}
The term $\delta(h)$ depends on the local consistency error and vanishes for $h\rightarrow 0$. Moreover, since we are comparing the numerical and exact solutions at a fixed point, we have $nh\rightarrow \text{const.}$. Therefore, $e_n\rightarrow 0$ and the method is convergent. The lipschitzian character of the nonlinearity makes the proof similar as in the linear equations. For the non-Lipschitz case we cannot in general write (\ref{eqn:ErrorLipschitz}) and thus the arguments have to be different and more subtle. Notice that this difficulty is the very consequence of the degeneracy of the main equation (\ref{eqn:DiffEqPDE}). We summarize the result in the following theorem. 

\begin{thm}\label{thm:Covergence}
Fix $z\in(0,1]$ and chose the weights of the quadrature (\ref{eqn:Quadrature}) according to (\ref{eqn:WeightsEven}) and (\ref{eqn:WeightsOdd}) while the starting values from (\ref{eqn:InitialStep}) and (\ref{eqn:InitialStep1}). Then, if  
\begin{equation}
	\mu_m := \frac{4 K_+}{(m+1) V_-} < 3,
\label{eqn:mum}
\end{equation}
the scheme (\ref{eqn:NumMet}) is convergent to the nontrivial solution of (\ref{eqn:VolterraEqV}) with an order at least equal to 
\begin{equation}
	\min\{2,3-\mu_m\}.
\label{eqn:Order}
\end{equation}
Here, $V_-^{1/m}$ is the lower bound for both $v=v(z)$ and $v_n$. 
\end{thm}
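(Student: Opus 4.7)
The plan is to reduce the error analysis to an application of the discrete Gr\"onwall-type bound of Lemma \ref{lem:GB}. Define $e_n := |v(z_n) - v_n|$ and subtract the scheme (\ref{eqn:NumMet}) from the exact identity (\ref{eqn:SecondOrderExact}) to obtain
\begin{equation*}
v(z_n)^{m+1} - v_n^{m+1} = z_n^{-(m+1)(\gamma+1)/m} \sum_{i=0}^{n-1} w_{n,i}(h)\bigl(v(z_i) - v_i\bigr) + \delta_n(h),
\end{equation*}
with $|\delta_n(h)| \leq C h^2$ by (\ref{eqn:SecondOrderRemainder}). The non-Lipschitz nonlinearity sits on the left-hand side, and the whole purpose of the rescaling $y = z^{(\gamma+1)/m} v$ was to neutralize it: by Lemma \ref{lem:BoundV} both $v(z_n)$ and $v_n$ lie above $V_-^{1/m}$, so the mean value theorem applied to $x \mapsto x^{m+1}$ gives the one-sided linearization $|v(z_n)^{m+1} - v_n^{m+1}| \geq (m+1) V_- e_n$.

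Next I would establish the uniform weight bound
\begin{equation*}
|w_{n,i}(h)| \leq 4 K_+ h\, z_n^{\gamma + (\gamma+1)/m}, \qquad 0 \leq i \leq n-1.
\end{equation*}
The factor $4$ records two facts specific to this discretisation: each weight in (\ref{eqn:WeightsEven})--(\ref{eqn:WeightsOdd}) is an integral over a double subinterval of length $2h$, and the Lagrange basis functions have absolute value bounded by $2$ because of the explicit extrapolation on the second half of each block. The remaining factors come from (\ref{eqn:BoundY}) together with $(z_n - s)^\gamma \leq z_n^\gamma$ and $s^{(\gamma+1)/m} \leq z_n^{(\gamma+1)/m}$. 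Plugging this into the previous inequality, dividing by $(m+1) V_-$, and using the algebraic cancellation
\begin{equation*}
z_n^{-(m+1)(\gamma+1)/m} \cdot z_n^{\gamma + (\gamma+1)/m} \cdot h = \frac{h}{z_n} = \frac{1}{n},
\end{equation*}
produces exactly the form
\begin{equation*}
e_n \leq \frac{\mu_m}{n} \sum_{i=0}^{n-1} e_i + \widetilde{C} h^2
\end{equation*}
with $\mu_m = 4 K_+/((m+1) V_-)$ as in (\ref{eqn:mum}). The cancellation above is the real payoff of the rescaling: without it the weighted sum would retain a $z_n$-dependent prefactor and would not match the lemma's hypothesis.

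The final step is to invoke Lemma \ref{lem:GB} with $\mu = \mu_m$ and $\delta = \widetilde{C} h^2$. The starting values $v_0$ and $v_1$ defined by (\ref{eqn:InitialStep})--(\ref{eqn:InitialStep1}) are $O(h^2)$-accurate by construction, so the induction base $e_1 \leq \delta$ is met (and $e_0$ is absorbed into $\delta$). The three regimes of (\ref{eqn:LemmaBoundF}) then translate into: order two for $\mu_m < 1$, an extra logarithmic factor at $\mu_m = 1$, and $e_n = O(h^2 \cdot n^{\mu_m-1}) = O(h^{3-\mu_m})$ for $1 < \mu_m < 3$ with $n = z/h$ at fixed $z \in (0,1]$. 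Taking the minimum yields the advertised order $\min\{2, 3 - \mu_m\}$, while the threshold $\mu_m < 3$ is dictated by the divergence of the asymptotic sum in Lemma \ref{lem:GB} past this value. The main technical obstacle I anticipate is the clean proof of the weight bound while separately tracking the even and odd cases in (\ref{eqn:WeightsEven})--(\ref{eqn:WeightsOdd}), since losing a constant there would push the admissible range of $\mu_m$ away from the sharp value $3$.
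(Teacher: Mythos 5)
Your proposal follows the paper's proof essentially verbatim: the same mean-value linearization of $x\mapsto x^{m+1}$ bounded below by $(m+1)V_-$ via Lemma \ref{lem:BoundV}, the same weight estimate $|w_{n,i}(h)|\le 4K_+h\,z_n^{\gamma+(\gamma+1)/m}$ whose exponent cancels against $z_n^{-(m+1)(\gamma+1)/m}$ to produce the $1/n$ prefactor, and the same invocation of Lemma \ref{lem:GB} to read off the three regimes. The only quibble is your closing remark about the threshold: the series in Lemma \ref{lem:GB} converges for every $\mu>1$, and the restriction $\mu_m<3$ arises simply from requiring the resulting order $3-\mu_m$ to be positive.
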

\begin{proof}
Let us define the error of the numerical approximation by $e_n = v(z_n) - v_n$, where $v(z)$ is the solution of (\ref{eqn:VolterraEqV}) while $v_n$ comes from (\ref{eqn:NumMet}). Now, the difference between these two equations is
\begin{equation}
	v(z_{n})^{m+1}-v_n^{m+1} = z_n^{-\frac{(m+1)(\gamma+1)}{m}} \left(\int_0^z K(z,s) s^\frac{\gamma+1}{m} v(s) ds-\sum_{i=0}^{n-1} w_{n,i}(h) v_i\right).
\end{equation}
With the use of (\ref{eqn:Quadrature}) and the construction leading to (\ref{eqn:SecondOrderExact}) we further obtain
\begin{equation}
	v(z_{n})^{m+1}-v_n^{m+1} = z_n^{-\frac{(m+1)(\gamma+1)}{m}} \sum_{i=1}^{n-1} w_{n,i}(h) e_i + \delta_n(h),
\end{equation}
where the remainder $\delta_n(h)$ satisfies (\ref{eqn:SecondOrderRemainder}) and the zero term vanishes due to exact starting value (\ref{eqn:InitialStep}). On the other hand, by the mean value theorem we can write
\begin{equation}
	v(z_{n})^{m+1}-v_n^{m+1} = (m+1)\xi_n^m e_n,
\end{equation}
where $\xi_n$ lies between $v(z_n)$ and $v_n$. Next, by combining the two above equations we have
\begin{equation}
	(m+1)\xi_n^m |e_n| \leq z_n^{-\frac{(m+1)(\gamma+1)}{m}} \sum_{i=1}^{n-1} |w_{n,i}(h)| |e_i| + C h^2,
\end{equation}
where $C$ is a constant explicitly defined in (\ref{eqn:SecondOrderRemainder}). The boundedness of both $v(z_n)$ and $v_n$ (see (\ref{eqn:BoundsV}) and Lemma \ref{lem:BoundV})) implies that $\xi_n \geq V_-$ for some constant and thus
\begin{equation}
	|e_n|\leq \frac{z_n^{-\frac{(m+1)(\gamma+1)}{m}} }{(m+1)V_-} \sum_{i=1}^{n-1} |w_{n,i}(h)| |e_i| + \frac{C h^2}{(m+1)V_-},
\end{equation}
which is a recurrence inequality which will eventually be solved with invoking Gr\"onwall-Bellman's lemma. To see this observe that by (\ref{eqn:WeightsEven}) and (\ref{eqn:WeightsOdd}) we have for example
\begin{equation}
	|w_{n,i}(h)| \leq \frac{4 K_+}{n} z_n^{\frac{(m+1)(\gamma+1)}{m}},
\end{equation}
where the factor $4/n$ comes from the fact that $|s-z_i|\leq 2h$ on each subinterval of length $2/n$, $K_+$ is the bound on the kernel (\ref{eqn:KernelBounds}), and the integrand is not larger than $1$. This leads to 
\begin{equation}
	|e_n|\leq \frac{4 K_+}{(m+1)V_-} \frac{1}{n} \sum_{i=1}^{n-1} |e_i| + \frac{C h^2}{(m+1)V_-} =: \frac{\mu_m}{n} \sum_{i=1}^{n-1} |e_i| + \delta.
\label{eqn:NonlocalRecurrence}
\end{equation}
Invoking Lemma \ref{lem:GB} thus yields
\begin{equation}
	|e_n| \leq \frac{C h^2}{(m+1)V_-} f_n,
\end{equation}
where $f_n$ is defined in (\ref{eqn:LemmaBound}). Now, if $0<\mu_m<1$, the error $|e_n|$ is bounded for all $n$, i.e.
\begin{equation}
	|e_n| \leq \frac{C h^2}{(m+1)V_--4 K_+}
\end{equation}
Further, for $\mu_m > 1$ we have
\begin{equation}
	|e_n|\leq \frac{C h^2}{(m+1)V_-} f_n \sim \frac{C h^2}{(\mu_m-1)\Gamma(\mu_m)(m+1)V_-} n^{\mu_m-1} \sim \frac{C z^{\mu_m-1}}{(\mu_m-1)\Gamma(\mu_m)(m+1)V_-} h^{3-\mu_m},
\end{equation} 
as $n\rightarrow\infty$. Here, we have used the fact that $nh \rightarrow z$. Similarly, the marginal case $\mu=1$ yields a convergence of order $-h^2 \ln h$. The proof is complete.  
\end{proof}
According to (\ref{eqn:Order}) the method retains its order for sufficiently small $\mu_m$. More specifically, when $\mu_m < 1$ the method is second order accurate. This estimate gradually becomes worse until $\mu_m = 3$ when the theorem does not guarantee convergence. Note, however, that choosing sufficiently large $m$ we always can obtain a second order method. The fact that, according to the theorem, the order of the method can be smaller than the order of the quadrature is probably the consequence of severe nonlinearity of the equation. This, in turn, corresponds to the degeneracy of the original PDE (\ref{eqn:DiffEqPDE}) what throughout the years has proved to lay significant difficulties for both theoretical and numerical studies (see for ex. \cite{Dib84,Eti12}). The main difference between the classical case of Lipschitzian nonlinearity is the occurrence of $1/n$ instead of $h$ in the nonlocal recurrence (\ref{eqn:NonlocalRecurrence}). This forces us to use Lemma \ref{lem:GB}) rather than the classical discrete Gr\"onwall-Bellman's lemma. Since $1/n$ changes in each recurrence step the assertion is somewhat different yielding a loss in convergence order. However, for $0<\mu_m<1$ the error is $O(h^2)$ as $h\rightarrow 0^+$ for any $n > 0$. A very thorough account of numerical methods for integral equations with Lipschitzian nonlinearities can be found in \cite{Lin85}. 

\section{Numerical examples}\label{sec:NumericalExamples}
In this section we present several numerical examples. We start with a simple integral equation that can be solved exactly in a closed form. Next, we consider a more complex equation and use it to verify the order of convergence. Further, we proceed to solving the time-fractional diffusion problems summarized in Tab. \ref{tab:AB} and we end this section with temporal computational complexity estimates and comparison with finite difference scheme. Numerical calculations have been conducted in Julia programming language. Integrals appearing in weights (\ref{eqn:WeightsEven}) and (\ref{eqn:WeightsOdd}) have been calculated with the QuadGK package that utilizes Gauss-Kronrod adaptive quadrature.

\subsection{Exact constant solution}
We start with a simple illustration of the exactness of our numerical methods. Let $K(z,s) = (z-s)^\gamma$ in (\ref{eqn:VolterraEqV}). According to (\ref{eqn:ConstantSolution}) the integral equation has then a constant solution (that is, a power function is a solution of (\ref{eqn:VolterraY})). In Tab. \ref{tab:ExactSolutionError} we have collected the maximal absolute error (first norm) of the numerical approximation (\ref{eqn:NumMet}) with second order weights (\ref{eqn:WeightsEven}) and (\ref{eqn:WeightsOdd}). Note that in calculations we have used $N=10$ which is relatively small. Our experiments showed that increasing the number of interval divisions does not improve the error. Notice also that the smallest error $O(10^{-16})$ appears for the case $\gamma=0$ and $m=1$ for which the exact solution is equal to $1/2$. The majority of simulations concluded that the error is of order of $O(10^{-11})$ we can conclude that the method performs very well and reproduces the exact solution with good accuracy. 

\begin{table}
	\centering
	\begin{tabular}{rllll}
		\toprule
		& $\gamma = 0$ & $\gamma = 0.5$ & $\gamma = \sqrt{2}$ & $\gamma = \pi$ \\
		\midrule
		$m = 1$ & $1.11\times 10^{-16}$ & $2.90\times 10^{-11}$ & $8.94\times 10^{-13}$ & $2.20\times 10^{-14}$ \\
		$m = 2$ & $2.40\times 10^{-10}$ & $1.18\times 10^{-10}$ & $1.19\times 10^{-11}$ & $1.60\times 10^{-13}$ \\
		$m = 10$ & $1.90\times 10^{-10}$ & $1.01\times 10^{-10}$ & $4.26\times 10^{-11}$ & $4.71\times 10^{-11}$ \\
		$m = 100$ & $1.85\times 10^{-11}$ & $2.03\times 10^{-11}$ & $1.97\times 10^{-11}$ & $1.01\times 10^{-11}$ \\
		\bottomrule
	\end{tabular}
	\caption{Maximal absolute error for a numerical approximation of the solution of (\ref{eqn:VolterraEqV}) with $K(z,s) = (z-s)^\gamma$ for various $\gamma$ and $m$. The number of interval divisions is $N=10$.}
	\label{tab:ExactSolutionError}
\end{table}

\subsection{Order of convergence}
We will calculate the empirical order of convergence of our second order method and compare it with the rectangle scheme (\ref{eqn:Rectangle}). This illustration will be completed with the use of the integral equation (\ref{eqn:VolterraEqV}) with 
\begin{equation}
	K(z,s) = \frac{\sqrt{z-s}}{1+\sin^2 s}.
\end{equation}
Obviously, the kernel satisfies our assumptions on the boundedness (\ref{eqn:KernelBounds}) with $K_- = 1/2$ and $K_+ = 1$. The order of convergence is estimated with the extrapolation technique known as Aitken's method (see \cite{Lin85})
\begin{equation}
	\text{order} \approx \log_2 \frac{\left|v^{(2N)}_{2N}-v_N^{(N)}\right|}{\left|v^{(4N)}_{4N}-v_{2N}^{(2N)}\right|},
\end{equation}
which compares the numerical solution evaluated at $z=1$ (the worst case) with calculations for $N$, $2N$, and $4N$ steps. The summary of obtained results is presented in Tab. \ref{tab:Order}. We can see that the estimated order is near 2 however, for larger $m$ it is somewhat lower. This fact probably originates in the necessity of taking a very high order root in the equation. Moreover, for this example, we have $\mu_m = 0.8 < 3$ what satisfies the assumption of Theorem \ref{thm:Covergence}. We can thus see that, as anticipated, the method is of second order. Moreover, similar calculations have been done for the rectangle method (\ref{eqn:Rectangle}) and the results were more uniform: the estimated order was equal to $0.99$ for all considered $m$. 

\begin{table}
	\centering

	\begin{tabular}{cccccccc}
		\toprule
		$m$ & 1 & 5 & 10 & 15 & 20 & 50 & 100 \\
		\midrule
		order & 2.02 & 2.00 & 1.96 & 1.92 & 1.90 & 1.86 & 1.84 \\
		\bottomrule
	\end{tabular}
	\caption{Numerically calculated order of convergence of the numerical method based on trapezoidal approximation (\ref{eqn:WeightsEven}), (\ref{eqn:WeightsOdd}). The base for Aitken's estimation is $N=100$. }
	\label{tab:Order}
\end{table}

\subsection{Anomalous diffusion}
Now, we proceed to the subject of the main interest - the time-fractional porous medium equation. We use the kernel (\ref{eqn:VolterraRobin}) which conveniently can be written with the help of incomplete beta functions
\begin{equation}
\begin{split}
	K(z,u) &= \frac{m+1}{\Gamma(1-\alpha)} \left[B(1-u)\beta(1-\alpha, a+2b+1, 1-w) \right. \\
	&\left.+ (A+B) \left((1-u) \beta(1-\alpha, a+2 b+1, 1-w) - (1-z) \beta(1-\alpha, a+b+1, 1-w)\right)\right],
\end{split}
\end{equation}
where
\begin{equation}
	\beta(a,b,z) = \int_0^z t^{a-1}(1-t)^{b-1} dt,
\end{equation}
and $A$, $B$, $a$, $b$ are chosen according to the boundary conditions summarized in Tab. \ref{tab:AB}. Some exemplary plots of the self-similar solutions of (\ref{eqn:DiffEqPDE}) are presented on Fig. \ref{fig:DiffSS}. 

\begin{figure}
	\centering
	\includegraphics[scale = 0.7]{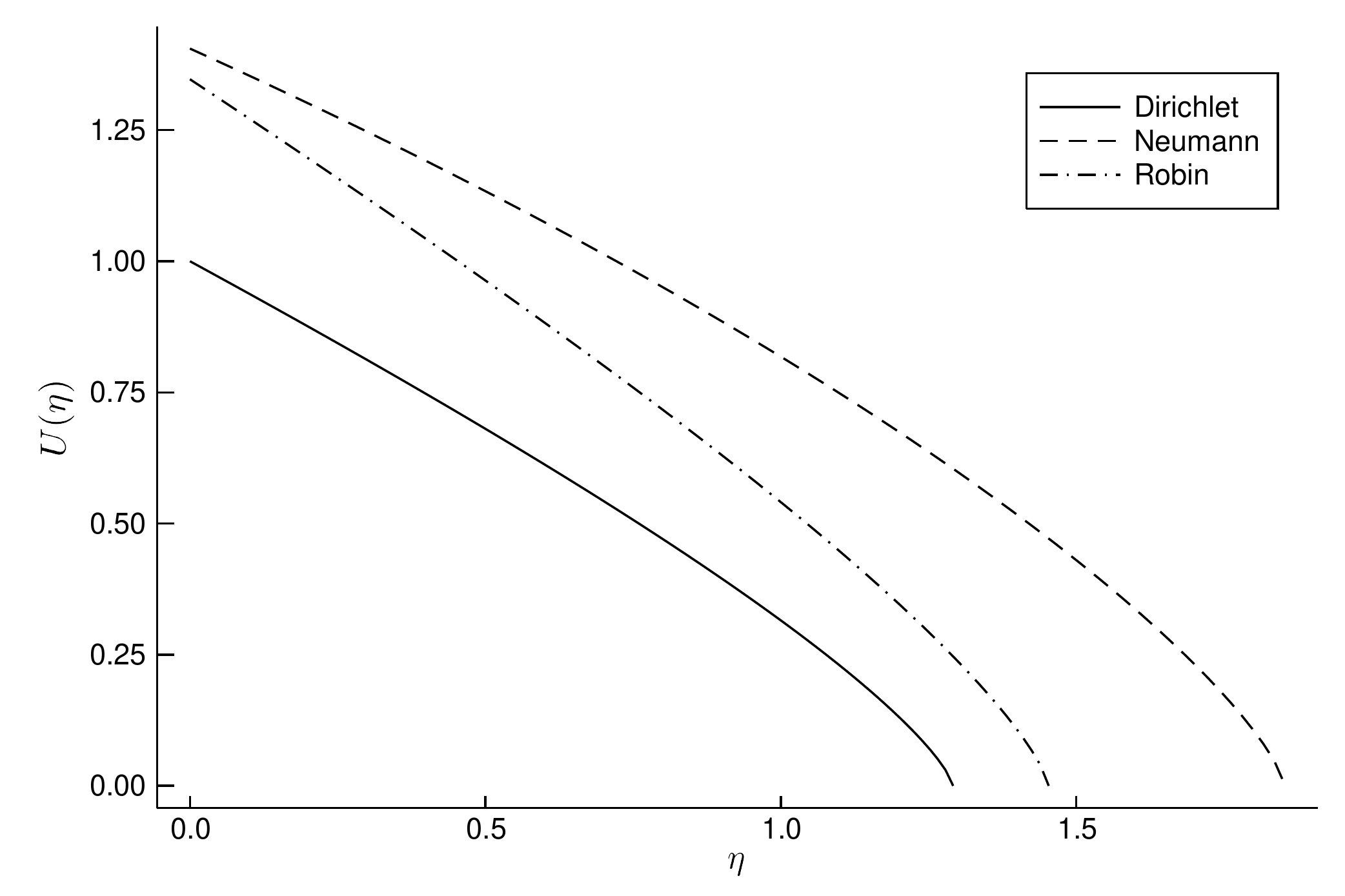}
	\caption{Exemplary self-similar solutions of the time-fractional porous medium equation (\ref{eqn:DiffEqPDE}). Here, $\alpha = 0.5$ and $m=2$. }
	\label{fig:DiffSS}
\end{figure}

First, we estimate the minimal value of parameter $m$, say $m_0$, for which the assumption of the Theorem \ref{thm:Covergence} is satisfied for the most important - Dirichlet - boundary condition. According to (\ref{eqn:mum}) it is equal to a zero of a function $m \mapsto \mu_m - 3$. In order to estimate that we have to find $K_+$ which can be a-priori calculated from (\ref{eqn:K+}). However, a more accurate bound comes from the estimate
\begin{equation}
	K_+ \geq \sup_{0\leq u\leq z \leq 1} \frac{K(z,u)}{(z-u)^{1-\alpha}},
\end{equation} 
which can be found numerically. Note also that when we compare the formulas (\ref{eqn:K+}) and (\ref{eqn:BoundsVn}) we can expect that the ratio $K_+/V_-$ is constant with respect to $m$. Whence $\mu_m$ should be $O(m^{-1})$ as $m\rightarrow\infty$. Results of numerical estimation are presented in Tab. \ref{tab:m0}. We can see that usually the value of $m_0$ is slightly larger than $1$ and closes to $3$ for very small $\alpha$. In applications, one usually finds $m \approx 7-8$ with $\alpha \in [0.6, 1]$ \cite{Sun13} and, for these important cases, we know that Theorem \ref{thm:Covergence} guarantees convergence. 

\begin{table}
	\centering
	\begin{tabular}{rccccccccccc}
		\toprule
		$\alpha$ & 0.99 & 0.9 & 0.8 & 0.7 & 0.6 & 0.5 & 0.4 & 0.3 & 0.2 & 0.1 & 0.01 \\
		\midrule
		$m_0$ & 1.10 & 1.14 & 1.19 & 1.25 & 1.28 & 1.35 & 1.66 & 1.98 & 2.31 & 2.62 & 2.84 \\
		\bottomrule
	\end{tabular}
	\caption{Estimated critical values of $m$ for which the assumptions of Theorem \ref{thm:Covergence} are satisfied for the case of anomalous diffusion.}
	\label{tab:m0}
\end{table}

Similarly as above we can estimate the order of convergence of the trapezoid method. We again use Aitken's algorithm and present the results in Tab. \ref{tab:OrderDiff}. As can be seen the estimated order stays near $2$ especially for moderate values of $m$ what has also been observed in the previous example.  

\begin{table}
	\centering
	\begin{tabular}{rccccccc}
		\toprule
		\backslashbox{$m$}{$\alpha$} & 0.1 & 0.3 & 0.5 & 0.7 & 0.9 & 0.99 \\ 
		\midrule
		1 & 1.99 & 2.00 & 2.00 & 1.98 & 2.08 & 2.07 \\
		3 & 1.96 & 1.99 & 1.99 & 1.98 & 1.97 & 1.96 \\
		5 & 1.93 & 1.96 & 1.96 & 1.95 & 2.01 & 1.92 \\
		7 & 1.90 & 1.93 & 1.93 & 1.93 & 1.95 & 1.88 \\
		10 & 1.87 & 1.91 & 1.91 & 1.91 & 1.92 & 1.85 \\
		15 & 1.86 & 1.89 & 1.90 & 1.90 & 1.91 & 1.81 \\
		\bottomrule
	\end{tabular}
	\caption{Order of convergence for the trapezoidal method applied to subdiffusion for different $\alpha$ and $m$.}
	\label{tab:OrderDiff}
\end{table}

As a further verification of our method we can compute the wetting front position $\eta^*$ for the classical case $\alpha = 1$ and compare it with results from \cite{Okr93}. In that work, the values of $\eta^*$ were computed with a use of power series and we can treat them as exact (up to 9 decimal places). Our calculations for $m=2$ are summarized in Tab. \ref{tab:WettingError}. Results for other values of $m$ are very similar. Notice that even for a very small number of subdivisions we obtain an error of order $O(10^{-5})$ which is a consequence of the second order accuracy. Further, numerical simulations show that on a log-log scale, the error $|\eta^*-\eta^*_{exact}|$ behaves as a line with tangent $-2$. Therefore, we can expect that the $|\eta^*-\eta^*_{exact}| = O(N^{-2})$ as $N\rightarrow\infty$. 

\begin{table}
	\centering
	\begin{tabular}{rccccccc}
		\toprule
		$N$ & 10 & 20 & 50 & 100 & 200 & 500 & 1500 \\
		\midrule
		$|\eta^*-\eta^*_{exact}|$ & $1.1\times 10^{-4}$ & $2.9\times 10^{-5}$ & $4.6\times 10^{-6}$ & $1.1\times 10^{-6}$ & $2.8\times 10^{-7}$ & $4.5\times 10^{-8}$ & $5.4\times 10^{-9}$ \\
		\bottomrule
	\end{tabular}
	\caption{The error in calculating wetting front position for $\alpha = 1$ and $m=2$. The exact value has been taken from \cite{Okr93a}. The error decays at a rate $O(N^{-2})$ as $N\rightarrow\infty$. }
	\label{tab:WettingError}
\end{table}

Wetting fronts for subdiffusive case cannot easily be compared with exact values. However, we have some useful asymptotics. First, let us consider the Dirichlet boundary condition for which we know that (see \cite{Plo19a})
\begin{equation}
	\eta^* = O(m^{-1/2}) \quad m\rightarrow \infty.
\end{equation} 
This relation can also be obtained by combining the value for $\eta^*$ taken from Tab. \ref{tab:AB} with estimates (\ref{eqn:YBounds}) and noting that $K_+ \propto m+1$. Our numerical simulations are depicted on Fig. \ref{fig:Wetting}. Notice that the above asymptotic behaviour is evident even for small values of $m$, i.e. in a log-log scale all lines become parallel to $m^{-1/2}$. 

A similar reasoning can be applied to Neumann and Robin conditions. To this end we need an asymptotic behaviour of the derivative $y'(1)$ for $m\rightarrow\infty$. First, notice that due to (\ref{eqn:K+}) and (\ref{eqn:YBounds}) we have $\|y\| = O((1+m)^{1/m}) = O(1)$ for large $m$. Then, from (\ref{eqn:yy'}) we obtain
\begin{equation}
	\|y'\| = O\left(\frac{1}{m+1}\right) \quad \text{as} \quad m\rightarrow\infty,
\end{equation}
uniformly for $\alpha \in (0,1)$. Next, using the value from Tab. \ref{tab:AB} we can write
\begin{equation}
	\eta^* = \left(\frac{1}{y^m(1)y'(1)}\right)^{\frac{m}{m+2}} = O\left(\frac{m+1}{m+1}\right)^{\frac{m}{m+2}} = O(1) \quad m\rightarrow\infty.
\end{equation}
for Neumann condition. Similarly, the wetting front for the Robin case also is bounded for large $m$. We can see on Fig. \ref{fig:Wetting} that this observation is confirmed with numerical simulations, that is for large $m$ wetting fronts converge to fixed values. 

\begin{figure}
	\centering
	\includegraphics[scale = 0.55]{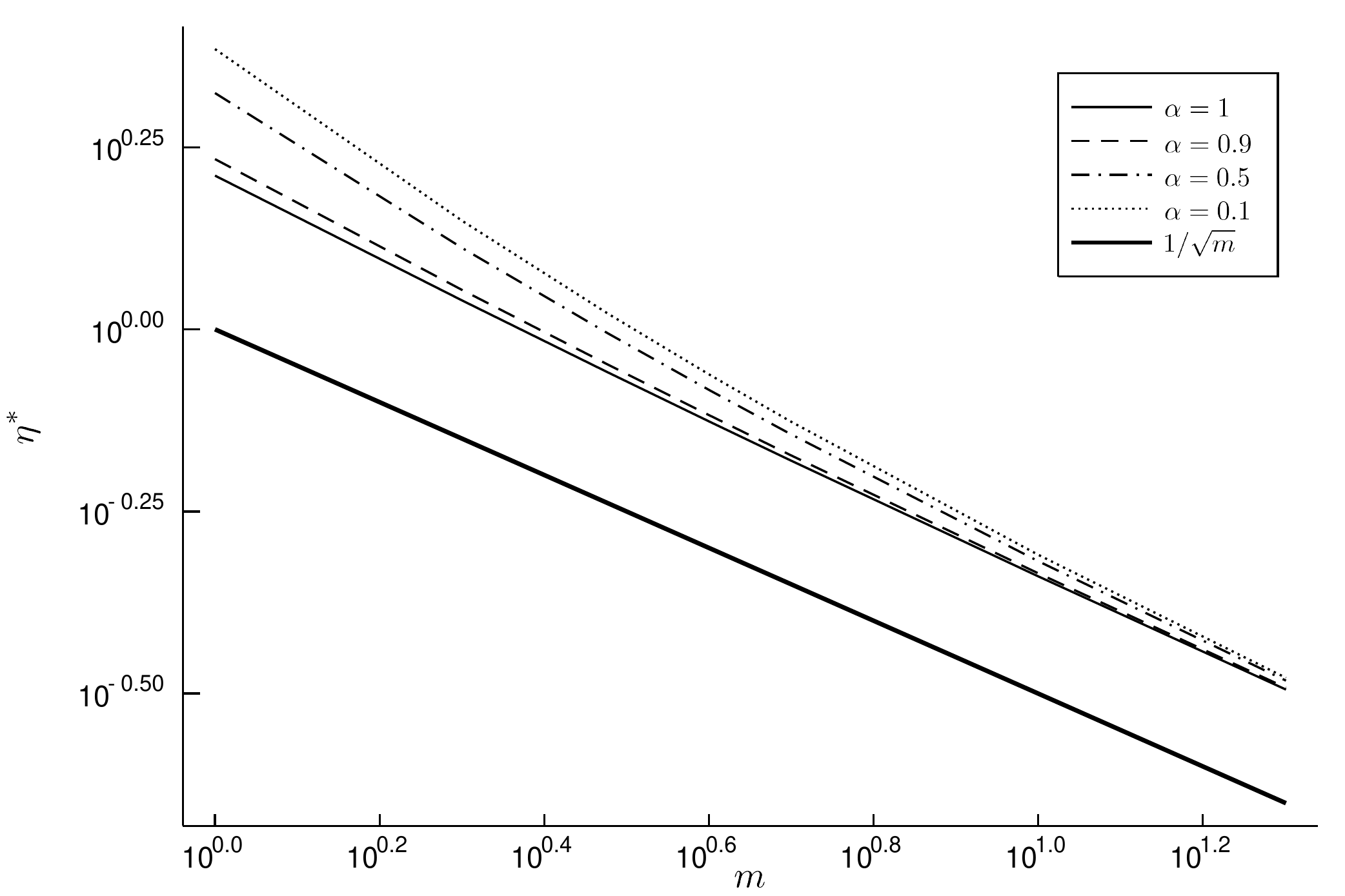}
	\includegraphics[scale = 0.55]{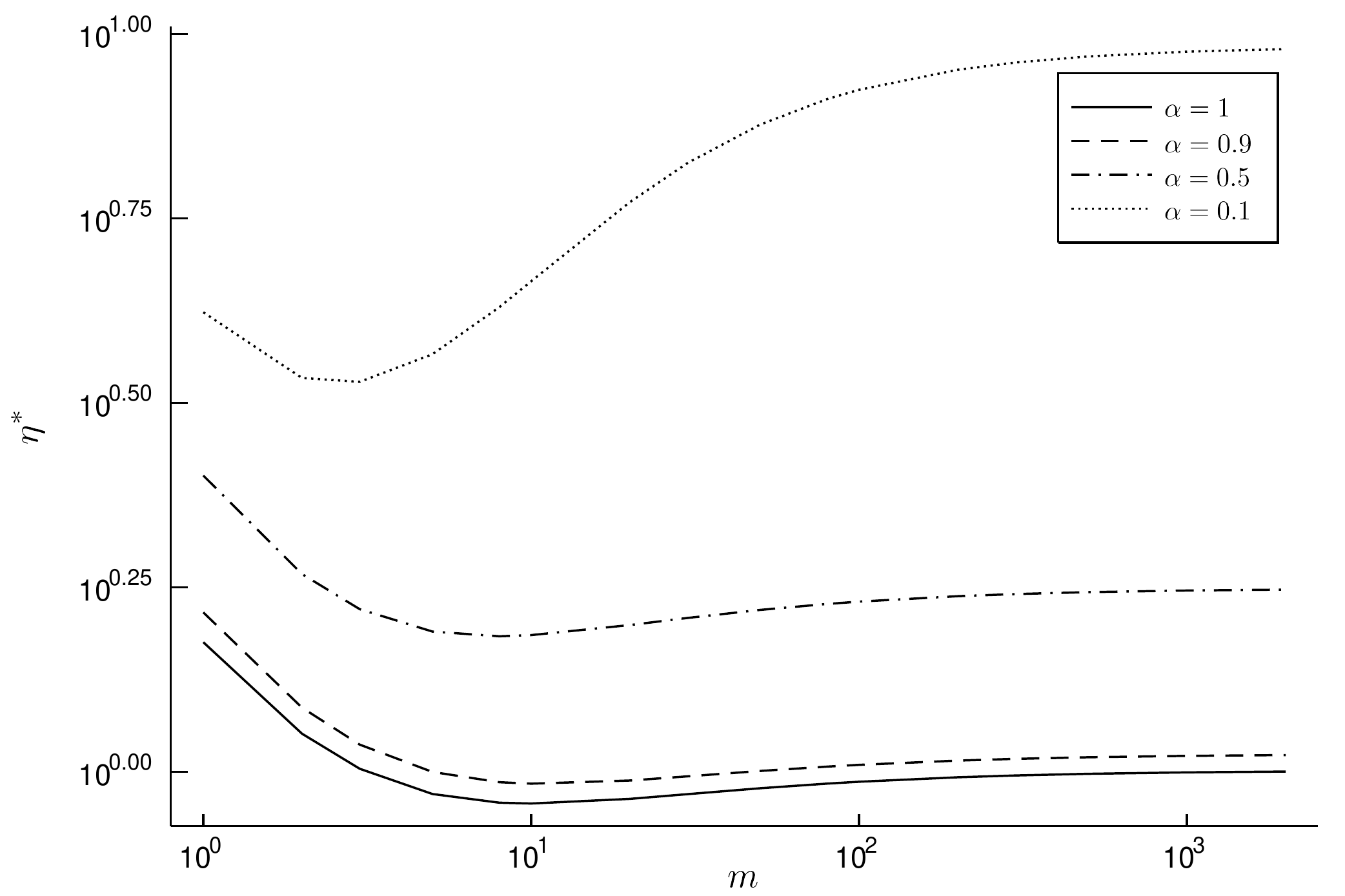}
	\includegraphics[scale = 0.55]{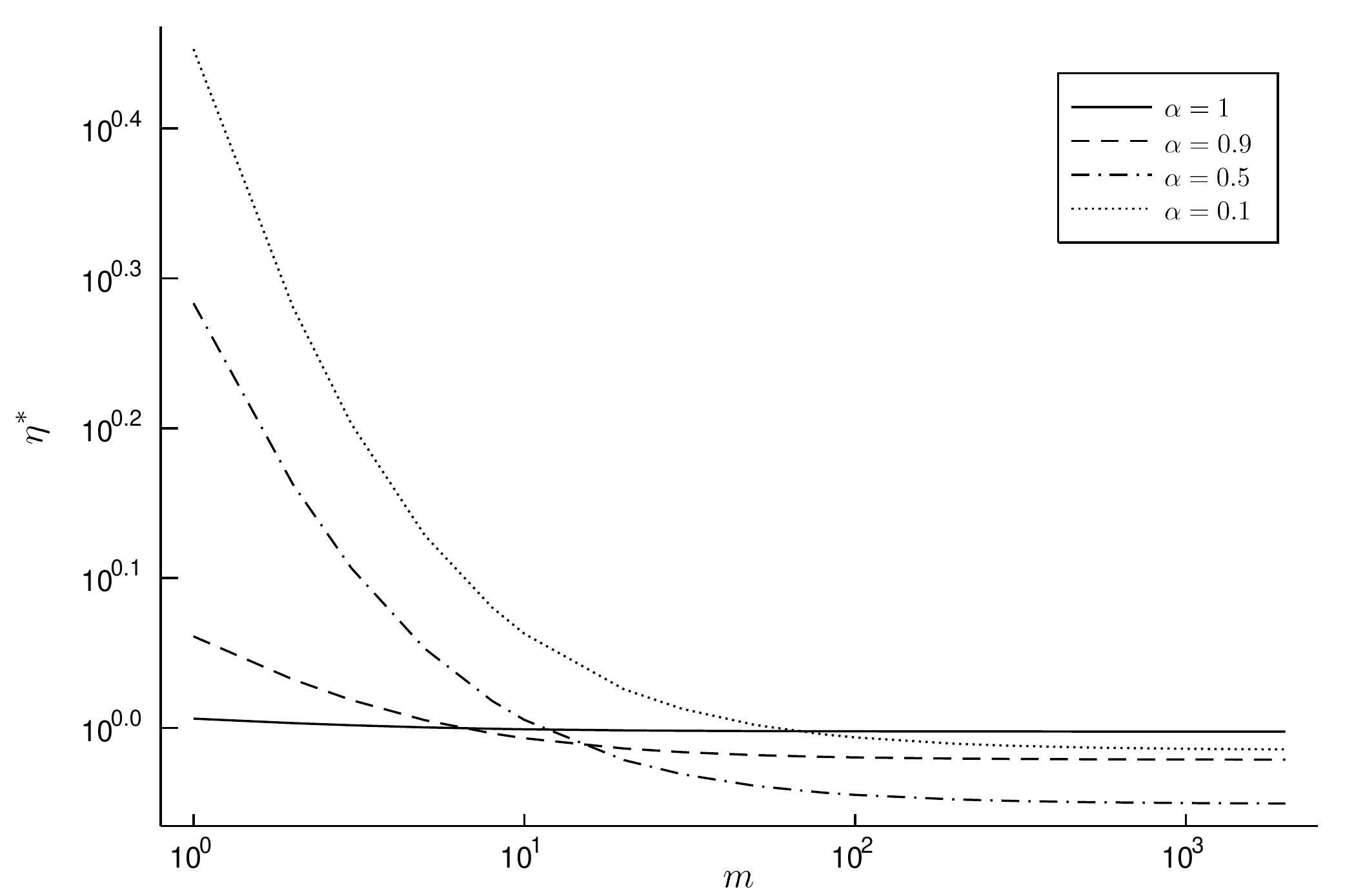}
	\caption{Wetting front position $\eta^*$ with respect to $m$ for a Dirichlet (top), Neumann (middle), and Robin (bottom) boundary conditions shown in a double logarithmic scale. }
	\label{fig:Wetting}
\end{figure}

\subsection{Comparison with finite difference method}
In order to compare our method with some other popular schemes we conduct a computation cost estimates in terms of the temporal complexity. We will contrast the scheme based on the Volterra integral equation (\ref{eqn:NumMet}) with the usual $\theta$-weighted finite difference scheme with L1 discretization of the fractional derivative \cite{Plo14, Li18} (however, the comparison with finite element or spectral methods would yield similar results). The diffusivity is linearized according to \cite{Cra87} and the method for solving (\ref{eqn:DiffEqPDE}) is following
\begin{equation}	
	\begin{split}
		&u_j^{i+1}-\left(1-\theta\right)\frac{h^\alpha}{k^2 \Gamma(2-\alpha)}\left(D^{i+1}_{j-1/2}u^{i+1}_{j-1}+\left(D^{i+1}_{j-1/2}+D^{i+1}_{j+1/2}\right)u^{i+1}_j+D^{i+1}_{j+1/2}u^{i+1}_{j+1}\right) \\
		&=-\sum_{k=1}^i a_{k,i} u^k_j+\theta \frac{h^\alpha}{k^2 \Gamma(2-\alpha)}\left(D^i_{j-1/2}u^i_{j-1}+\left(D^i_{j-1/2}+D^i_{j+1/2}\right)u^i_j+D^i_{j+1/2}u^i_{j+1}\right),
	\end{split}
	\label{eqn:FiniteDifference}
\end{equation}
where the weights are defined by
\begin{equation}
	a_{k,i}=(i-k+2)^{1-\alpha}-2(i-k+1)^{1-\alpha}+(i-k)^{1-\alpha}. 
\end{equation}
and $D^i_{j\pm 1/2}$ is the linearised value of the diffusion coefficient
\begin{equation}
	\begin{split}
		D^i_{j\pm 1/2}= \frac{1}{2}&\left((u^m)^i_j+m (u^{m-1})^i_j\left(u^i_j-u^{i-1}_j\right)+ \right. \\
		& \left. (u^m)^i_{j\pm 1/2}+m (u^{m-1})^i_{j\pm 1/2}\left(u^i_{j\pm 1/2}-u^{i-1}_{j\pm 1/2}\right)\right).
	\end{split}
\end{equation}
Here, $u^i_j$ is the numerical approximation of $u(x_j,t_i)$, where $x_j = j \Delta x$ and $t_i = i \Delta t$. 

Suppose we would like to compute the wetting front position in the Dirichlet problem with a tolerance $\epsilon>0$ (but any other value of $u$ would serve the same purpose for this benchmark). According to Tab. \ref{tab:AB} this requires finding $y(1)=v(1)$. Note that other boundary conditions would need $y'(1)$ which does not change our reasoning. Since the method is of second order, we would like to choose $N>0$ in order to satisfy
\begin{equation}
	|V_N - v(1)| \leq \frac{C}{N^2} \leq \epsilon,
\end{equation}
which immediately gives $N = O(\epsilon^{-1/2})$ as $\epsilon\rightarrow 0^+$. Due to nonlocality of the problem (\ref{eqn:NumMet}), each value $y_n$ arises from all previous ones with $N(N+1)/2$ additions and multiplications with weights $w_{n,i}(h)$. These, in turn have to be calculated by integration for which we assume a constant cost $c_I$. Finally, in each step of the iteration we take a $m+1$-th root. The method is started with initial values (\ref{eqn:InitialStep}) and (\ref{eqn:InitialStep1}) which require $3$ integrations, one root taking, and one solution of the nonlinear algebraic equation which costs $c_E$. Therefore, the total cost of the scheme can be estimated as
\begin{equation}
	\text{temporal complexity of (\ref{eqn:NumMet})} = \left(2+c_I\right)\frac{N(N+1)}{2} + N+1 + 3 c_I + c_E = O(N^2) = O(\epsilon^{-1}),
\end{equation}
as $\epsilon\rightarrow 0^+$. Hence, we obtain a linear relationship between calculating the wetting front and the prescribed tolerance. 

As for the finite difference we assume that we can choose $\theta$ in order for the scheme to be unconditionally stable (this is not trivial since the equation we are solving is nonlinear and degenerate). We are thus allowed to choose $\Delta x = O(\Delta t)$ with $\Delta t = T/N$, where the final time is denoted by $T$. The wetting front $x^*(T)$ is approximated by iterating the scheme up until the final time and then finding $j^*$ such that $u_{j^*}^N >0$ and $u_{j+1}^N = 0$. Since the fractional derivative in our finite differences is discretized using the L1 scheme, the method will have temporal order \textit{at most} equal to $2-\alpha$ (the order depends on the regularity of the solution, see \cite{Kop19}), so that in the best case we have
\begin{equation}
	|u^N_{j^*} - u(x^*(T), T)| = |u^N_{j^*}| \leq C \left(\Delta t^{2-\alpha} + \Delta x^2\right) \leq C \Delta t^{2-\alpha} = C \left(\frac{T}{N}\right)^{2-\alpha} < \epsilon,
\end{equation} 
so that we should have at least $N = O(\epsilon^{-1/(2-\alpha)})$ as $\epsilon \rightarrow 0^+$. Now, similarly as above we can estimate the temporal complexity of the algorithm. Let $c_F$ denote the fixed cost of each time step of the finite difference scheme (\ref{eqn:FiniteDifference}), i.e. it contains all additions and multiplications needed to advance in time. Because of the fractional derivative, in order to arrive at step $N$ we have to evaluate dot products of previous solutions $u^k_j$ with the weights $a_{k,i}$. This gives rise to $2\times N(N+1)/2$ floating point operations. Moreover, since the method is implicit in the stable case, in each time step we have to solve a tridiagonal system which costs $O(N)$ operations. Therefore, we can estimate the total cost of the method
\begin{equation}
	\text{temporal complexity of (\ref{eqn:FiniteDifference})} = N(N+1)c_F + N\times O(N) = O(N^2) = O(\epsilon^{-\frac{2}{2-\alpha}}),
\end{equation}  
as $\epsilon\rightarrow 0^+$. As we can see, the computational cost of calculating the wetting front is always higher for the finite difference case than it is for our method. Note also that we have assumed the best case for the former method, that is sufficient regularity of the solution guaranteeing $2-\alpha$ temporal order, and unconditional stability that allowed us to choose $\Delta x$ and $\Delta t$ of the same order. Thorough numerical calculations supporting this claim has been conducted in \cite{Plo19a} which indicate that the method based on Volterra equation is superior. 

\section{Conclusion}
We have constructed a convergent second order method for solving (\ref{eqn:VolterraY}) which can encompass self-similar solutions of a time-fractional porous medium equation on the half-line. This fact is a consequence of a series of transformations that changed a nonlocal nonlinear PDE into an ordinary Volterra integral equation. The interesting feature of the latter is a non-Lipschitz nonlinearity that possess some difficulties in numerical analysis. 

Our second order method is based on a linear (trapezoidal) reconstruction has been applied to several examples. Numerical calculations confirmed convergence with desired accuracy. We have observed that it suffices to use a relatively small number of interval subdivisions in order to obtain a decent approximation of the exact solution. Moreover, the method reproduced the asymptotic behaviour of the wetting front for large values of $m$ in three considered boundary conditions. All calculations have been conducted on a personal computer with a four core processor. Each simulation took at most few tens of seconds. This can be compared with our previous observation made in \cite{Plo19a} stating that our numerical method is much faster that the implicit finite difference scheme applied to (\ref{eqn:DiffEqPDE}). The Reader is refereed to that work for concrete computation times for these two approaches. The nonlocality, nonlinearity, stiffness, and degeneracy of the governing equation require much computing power to resolve. Transforming the original PDE into an ordinary Volterra equation reduces one degree of freedom and simplifies the free-boundary problem which facilitates the method's construction and performance. Therefore, our trapezoidal method is an accurate and fast way of computing self-similar solutions of the time-fractional porous medium equation on the half-line. 

\section*{Acknowledgement}
Ł.P. has been supported by the National Science Centre, Poland (NCN) under the grant Sonata Bis with a number NCN 2020/38/E/ST1/00153.

\small

\bibliography{biblio2}

\begin{thebibliography}{10}
\expandafter\ifx\csname url\endcsname\relax
  \def\url#1{\texttt{#1}}\fi
\expandafter\ifx\csname urlprefix\endcsname\relax\def\urlprefix{URL }\fi
\expandafter\ifx\csname href\endcsname\relax
  \def\href#1#2{#2} \def\path#1{#1}\fi

\bibitem{Met00}
R.~Metzler, J.~Klafter, The random walk's guide to anomalous diffusion: a
  fractional dynamics approach, Physics reports 339~(1) (2000) 1--77.

\bibitem{Kla08}
R.~Klages, G.~Radons, I.~M. Sokolov, Anomalous transport: foundations and
  applications, John Wiley \& Sons, 2008.

\bibitem{Kla12}
J.~Klafter, S.~Lim, R.~Metzler, Fractional dynamics: recent advances, World
  Scientific, 2012.

\bibitem{Sch97}
S.~Schaufler, W.~Schleich, V.~Yakovlev, Scaling and asymptotic laws in
  subrecoil laser cooling, EPL (Europhysics Letters) 39~(4) (1997) 383.

\bibitem{Del05}
D.~del Castillo-Negrete, B.~Carreras, V.~Lynch, Nondiffusive transport in
  plasma turbulence: a fractional diffusion approach, Physical {R}eview
  {L}etters 94~(6) (2005) 065003.

\bibitem{Lev97}
M.~Levandowsky, B.~White, F.~Schuster, Random movements of soil amebas, Acta
  Protozoologica 36 (1997) 237--248.

\bibitem{Sun17}
T.~Sungkaworn, M.-L. Jobin, K.~Burnecki, A.~Weron, M.~J. Lohse, D.~Calebiro,
  Single-molecule imaging reveals receptor--g protein interactions at cell
  surface hot spots, Nature 550~(7677) (2017) 543.

\bibitem{De11}
A.~de~Pablo, F.~Quiros, A.~Rodriguez, J.~L. Vazquez, A fractional porous medium
  equation, Advances in Mathematics 226~(2) (2011) 1378--1409.

\bibitem{Pac03}
Y.~Pachepsky, D.~Timlin, W.~Rawls, Generalized richards' equation to simulate
  water transport in unsaturated soils, Journal of Hydrology 272~(1) (2003)
  3--13.

\bibitem{Plo14}
{\L}.~P{\l}ociniczak, Approximation of the {E}rd\'elyi--{K}ober operator with
  application to the time-fractional porous medium equation, SIAM Journal on
  Applied Mathematics 74~(4) (2014) 1219--1237.

\bibitem{El04}
A.~E.-G. El~Abd, J.~J. Milczarek, Neutron radiography study of water absorption
  in porous building materials: anomalous diffusion analysis, Journal of
  Physics D: Applied Physics 37~(16) (2004) 2305.

\bibitem{Sun13}
H.~Sun, M.~M. Meerschaert, Y.~Zhang, J.~Zhu, W.~Chen, A fractal {R}ichards
  equation to capture the non-boltzmann scaling of water transport in
  unsaturated media, Advances in Water Resources 52 (2013) 292--295.

\bibitem{Hea72}
A.~Head, Dislocation group dynamics iii. similarity solutions of the continuum
  approximation, Philosophical Magazine 26~(1) (1972) 65--72.

\bibitem{Pod98}
I.~Podlubny, Fractional differential equations: an introduction to fractional
  derivatives, fractional differential equations, to methods of their solution
  and some of their applications, Vol. 198, Academic press, 1998.

\bibitem{Caf11}
L.~Caffarelli, J.~L. Vazquez, Nonlinear porous medium flow with fractional
  potential pressure, Archive for Rational Mechanics and Analysis 2~(202)
  (2011) 537--565.

\bibitem{Vaz17}
J.~L. V{\'a}zquez, The mathematical theories of diffusion: Nonlinear and
  fractional diffusion, in: Nonlocal and Nonlinear Diffusions and Interactions:
  New Methods and Directions, Springer, 2017, pp. 205--278.

\bibitem{Plo15}
{\L}.~P{\l}ociniczak, Analytical studies of a time-fractional porous medium
  equation. derivation, approximation and applications, Communications in
  Nonlinear Science and Numerical Simulation 24~(1) (2015) 169--183.

\bibitem{Plo19}
{\L}.~P{\l}ociniczak, Derivation of the nonlocal pressure form of the
  fractional porous medium equation in the hydrological setting, Communications
  in Nonlinear Science and Numerical Simulation 76 (2019) 66--70.

\bibitem{El19}
A.~El~Abd, S.~Kichanov, M.~Taman, K.~Nazarov, D.~Kozlenko, W.~M. Badawy,
  Determination of moisture distributions in porous building bricks by neutron
  radiography, Applied Radiation and Isotopes 156 (2020) 108970.

\bibitem{Bil15}
P.~Biler, C.~Imbert, G.~Karch, The nonlocal porous medium equation: Barenblatt
  profiles and other weak solutions, Archive for Rational Mechanics and
  Analysis 215~(2) (2015) 497--529.

\bibitem{Dji19}
J.-D. Djida, J.~J. Nieto, I.~Area, Nonlocal time porous medium equation with
  fractional time derivative, Revista Matem{\'a}tica Complutense 32~(2) (2019)
  273--304.

\bibitem{Fan21}
Q.~Fan, G.-C. Wu, H.~Fu, A note on function space and boundedness of the
  general fractional integral in continuous time random walk, Journal of
  Nonlinear Mathematical Physics (2021) 1--8.

\bibitem{Fu21}
H.~Fu, G.-C. Wu, G.~Yang, L.-L. Huang, Continuous time random walk to a general
  fractional fokker--planck equation on fractal media, The European Physical
  Journal Special Topics (2021) 1--7.

\bibitem{Plo17a}
{\L}.~P{\l}ociniczak, M.~\'Swita{\l}a, Existence and uniqueness results for a
  time-fractional nonlinear diffusion equation, Journal of Mathematical
  Analysis and Applications 462(2) (2018) 1425--1434.

\bibitem{Plo16}
{\L}.~P{\l}ociniczak, Diffusivity identification in a nonlinear time-fractional
  diffusion equation, Fractional Calculus and Applied Analysis 19 (2016)
  883--866.

\bibitem{Plo19a}
{\L}.~P{\l}ociniczak, Numerical method for the time-fractional porous medium
  equation, SIAM Journal on Numerical Analysis 57~(2) (2019) 638--656.

\bibitem{Die02}
K.~Diethelm, N.~J. Ford, Analysis of fractional differential equations, Journal
  of Mathematical Analysis and Applications 265~(2) (2002) 229--248.

\bibitem{Bal12}
D.~Baleanu, K.~Diethelm, E.~Scalas, J.~J. Trujillo, Fractional calculus: models
  and numerical methods, Vol.~3, World Scientific, 2012.

\bibitem{Tad06}
C.~Tadjeran, M.~M. Meerschaert, H.-P. Scheffler, A second-order accurate
  numerical approximation for the fractional diffusion equation, Journal of
  Computational Physics 213~(1) (2006) 205--213.

\bibitem{Li09}
X.~Li, C.~Xu, A space-time spectral method for the time fractional diffusion
  equation, SIAM Journal on Numerical Analysis 47~(3) (2009) 2108--2131.

\bibitem{Yus05}
S.~B. Yuste, L.~Acedo, An explicit finite difference method and a new von
  neumann-type stability analysis for fractional diffusion equations, SIAM
  Journal on Numerical Analysis 42~(5) (2005) 1862--1874.

\bibitem{Liu07}
F.~Liu, P.~Zhuang, V.~Anh, I.~Turner, K.~Burrage, Stability and convergence of
  the difference methods for the space--time fractional advection--diffusion
  equation, Applied Mathematics and Computation 191~(1) (2007) 12--20.

\bibitem{Lan05}
T.~Langlands, B.~I. Henry, The accuracy and stability of an implicit solution
  method for the fractional diffusion equation, Journal of Computational
  Physics 205~(2) (2005) 719--736.

\bibitem{Cue06}
E.~Cuesta, C.~Lubich, C.~Palencia, Convolution quadrature time discretization
  of fractional diffusion-wave equations, Mathematics of Computation 75~(254)
  (2006) 673--696.

\bibitem{Lin07}
Y.~Lin, C.~Xu, Finite difference/spectral approximations for the
  time-fractional diffusion equation, Journal of computational physics 225~(2)
  (2007) 1533--1552.

\bibitem{Zhu16}
P.~Zhuang, F.~Liu, I.~Turner, V.~Anh, Galerkin finite element method and error
  analysis for the fractional cable equation, Numerical Algorithms 72~(2)
  (2016) 447--466.

\bibitem{Li19c}
D.~Li, C.~Wu, Z.~Zhang, Linearized {G}alerkin {FEM}s for nonlinear time
  fractional parabolic problems with non-smooth solutions in time direction,
  Journal of Scientific Computing 80~(1) (2019) 403--419.

\bibitem{Li18}
D.~Li, H.-L. Liao, W.~Sun, J.~Wang, J.~Zhang, Analysis of {L}1-{G}alerkin
  {FEM}s for time-fractional nonlinear parabolic problems, Communications in
  Computational Physics 24~(1) (2018) 86--103.

\bibitem{Lia18}
H.-l. Liao, D.~Li, J.~Zhang, Sharp error estimate of the nonuniform {L}1
  formula for linear reaction-subdiffusion equations, SIAM Journal on Numerical
  Analysis 56~(2) (2018) 1112--1133.

\bibitem{McK82}
S.~McKee, Generalised discrete gronwall lemmas, ZAMM-Journal of Applied
  Mathematics and Mechanics/Zeitschrift f{\"u}r Angewandte Mathematik und
  Mechanik 62~(9) (1982) 429--434.

\bibitem{Web19}
J.~R. Webb, Weakly singular gronwall inequalities and applications to
  fractional differential equations, Journal of Mathematical Analysis and
  Applications 471~(1-2) (2019) 692--711.

\bibitem{Zak20}
M.~A. Zaky, A.~S. Hendy, J.~E. Mac{\'\i}as-D{\'\i}az, Semi-implicit
  {G}alerkin--{L}egendre spectral schemes for nonlinear time-space fractional
  diffusion--reaction equations with smooth and nonsmooth solutions, Journal of
  Scientific Computing 82~(1) (2020) 1--27.

\bibitem{Del14}
F.~del Teso, Finite difference method for a fractional porous medium equation,
  Calcolo 51~(4) (2014) 615--638.

\bibitem{Caf07}
L.~Caffarelli, L.~Silvestre, An extension problem related to the fractional
  laplacian, Communications in {P}artial {D}ifferential {E}quations 32~(8)
  (2007) 1245--1260.

\bibitem{Del18}
F.~del Teso, J.~Endal, E.~R. Jakobsen, Robust numerical methods for local and
  nonlocal equations of porous medium type. part i: Theory, arXiv preprint
  arXiv:1801.07148 (2018).

\bibitem{Gas17}
F.~J. Gaspar, C.~Rodrigo, Multigrid waveform relaxation for the time-fractional
  heat equation, SIAM Journal on Scientific Computing 39~(4) (2017)
  A1201--A1224.

\bibitem{Bhr16}
A.~Bhrawy, A {J}acobi spectral collocation method for solving multi-dimensional
  nonlinear fractional sub-diffusion equations, Numerical Algorithms 73~(1)
  (2016) 91--113.

\bibitem{Dib84}
E.~DiBenedetto, D.~Hoff, An interface tracking algorithm for the porous medium
  equation, Transactions of the American Mathematical Society 284~(2) (1984)
  463--500.

\bibitem{Sta14}
D.~Stan, F.~del Teso, J.~L. V{\'a}zquez, Finite and infinite speed of
  propagation for porous medium equations with fractional pressure, Comptes
  Rendus Mathematique 352~(2) (2014) 123--128.

\bibitem{Arb96}
T.~Arbogast, M.~F. Wheeler, A nonlinear mixed finite element method for a
  degenerate parabolic equation arising in flow in porous media, SIAM Journal
  on Numerical Analysis 33~(4) (1996) 1669--1687.

\bibitem{Ebm98}
C.~Ebmeyer, Error estimates for a class of degenerate parabolic equations, SIAM
  Journal on Numerical Analysis 35~(3) (1998) 1095--1112.

\bibitem{Eti12}
E.~Etienne, D.~{\v{S}}i{\v{s}}ka, Full discretization of the porous medium/fast
  diffusion equation based on its very weak formulation, Communications in
  Mathematical Sciences 10~(4) (2012) 1055--1080.

\bibitem{Pop02}
I.~S. Pop, W.-A. Yong, A numerical approach to degenerate parabolic equations,
  Numerische Mathematik 92~(2) (2002) 357--381.

\bibitem{Eis18}
M.~Eisenmann, E.~Hansen, Convergence analysis of domain decomposition based
  time integrators for degenerate parabolic equations, Numerische mathematik
  140~(4) (2018) 913--938.

\bibitem{Bru17}
H.~Brunner, {V}olterra Integral Equations: An Introduction to Theory and
  Applications, Vol.~30, Cambridge University Press, 2017.

\bibitem{Bak00}
C.~T. Baker, A perspective on the numerical treatment of {V}olterra equations,
  Journal of computational and applied mathematics 125~(1-2) (2000) 217--249.

\bibitem{Lin85}
P.~Linz, Analytical and numerical methods for {V}olterra equations, Vol.~7,
  SIAM, 1985.

\bibitem{Fri97}
K.~Frischmuth, N.~J. Ford, J.~T. Edwards, Volterra integral equations with
  non-lipschitz nonlinearity, in: Rostocker Mathematisches Kolloquium, Vol.~51,
  1997, pp. 65--82.

\bibitem{Buc97}
E.~Buckwar, Iterative Approximation of the Positive Solutions of a Class of
  Nonlinear {V}olterra-type Integral Equations, Logos Verlag, 1997.

\bibitem{Plo17}
{\L}.~P{\l}ociniczak, H.~Okrasi{\'n}ska-P{\l}ociniczak, Numerical method for
  {V}olterra equation with a power-type nonlinearity, Applied Mathematics and
  Computation 337 (2018) 452--460.

\bibitem{Cor00}
E.~Buckwar, On a nonlinear {V}olterra integral equation, in: Volterra equations
  and applications, CRC Press, 2000, pp. 157--162.

\bibitem{Ari19}
M.~Arias, R.~Ben{\'\i}tez, V.~Bol{\'o}s, Non-lipschitz homogeneous {V}olterra
  integral equations, in: Modern Mathematics and Mechanics, Springer, 2019, pp.
  237--259.

\bibitem{Gaz09}
R.~Gazizov, A.~Kasatkin, S.~Y. Lukashchuk, Symmetry properties of fractional
  diffusion equations, Physica Scripta 2009~(T136) (2009) 014016.

\bibitem{Gaz13}
R.~K. Gazizov, A.~A. Kasatkin, Construction of exact solutions for fractional
  order differential equations by the invariant subspace method, Computers \&
  Mathematics with Applications 66~(5) (2013) 576--584.

\bibitem{Gor00}
R.~Gorenflo, Y.~Luchko, F.~Mainardi, Wright functions as scale-invariant
  solutions of the diffusion-wave equation, Journal of Computational and
  Applied Mathematics 118~(1-2) (2000) 175--191.

\bibitem{Kir93}
V.~S. Kiryakova, Generalized fractional calculus and applications, CRC Press,
  1993.

\bibitem{Dji18a}
J.-D. Djida, J.~J. Nieto, I.~Area, Nonlocal time-porous medium equation: weak
  solutions and finite speed of propagation, Discrete Continuous Dyn. Syst.
  Ser. B (2018).

\bibitem{Atk71}
F.~Atkinson, L.~Peletier, Similarity profiles of flows through porous media,
  Archive for Rational Mechanics and Analysis 42~(5) (1971) 369--379.

\bibitem{Buc05}
E.~Buckwar, Existence and uniqueness of solutions of {A}bel integral equations
  with power-law non-linearities, Nonlinear Analysis: Theory, Methods \&
  Applications 63~(1) (2005) 88--96.

\bibitem{Gri81}
G.~Gripenberg, Unique solutions of some {V}olterra integral equations,
  Mathematica Scandinavica 48~(1) (1981) 59--67.

\bibitem{Bus90}
P.~Bushell, W.~Okrasinski, Nonlinear {V}olterra integral equations with
  convolution kernel, Journal of the London Mathematical Society 2~(3) (1990)
  503--510.

\bibitem{Okr95}
W.~Okrasinski, On nontrivial solutions to some nonlinear ordinary differential
  equations, Journal of Mathematical Analysis and Applications 190~(2) (1995)
  578--583.

\bibitem{Lia19}
H.-l. Liao, W.~McLean, J.~Zhang, A discrete gronwall inequality with
  applications to numerical schemes for subdiffusion problems, SIAM Journal on
  Numerical Analysis 57~(1) (2019) 218--237.

\bibitem{Ame97}
W.~F. Ames, B.~Pachpatte, Inequalities for differential and integral equations,
  Vol. 197, Academic press, 1997.

\bibitem{Okr93}
W.~Okrasi{\'n}ski, On approximate solutions to some nonlinear diffusion
  problems, Zeitschrift f{\"u}r angewandte Mathematik und Physik ZAMP 44~(4)
  (1993) 722--731.

\bibitem{Okr93a}
W.~Okrasi{\'n}ski, S.~Vila, Power series solutions to some nonlinear diffusion
  problems, Zeitschrift f{\"u}r angewandte Mathematik und Physik ZAMP 44~(6)
  (1993) 988--997.

\bibitem{Cra87}
J.~Crank, Free and moving boundary problems, Oxford University Press, 1987.

\bibitem{Kop19}
N.~Kopteva, Error analysis of the l1 method on graded and uniform meshes for a
  fractional-derivative problem in two and three dimensions, Mathematics of
  Computation 88~(319) (2019) 2135--2155.

\end{thebibliography}
\bibliographystyle{elsarticle-num}

\end{document}